\numberwithin{equation}{section}
\newtheorem{theorem}{Theorem}[section]
\newtheorem{proposition}[theorem]{Proposition}
\newtheorem{lemma}[theorem]{Lemma}
\newtheorem{corollary}[theorem]{Corollary}
\theoremstyle{definition}
\newtheorem{definition}[theorem]{Definition}
\newtheorem{problem}[theorem]{Problem}
\theoremstyle{remark}
\newtheorem{remark}[theorem]{Remark}
\newcommand{\Z}{\mathcal{Z}}
\newcommand{\RZ}{\mathbb{R}\mathcal{Z}}
\title[minimal Taylor resolutions]{Golodness and polyhedral products of simplicial complexes with minimal Taylor resolutions}
\author{Kouyemon Iriye}
\address{Department of Mathematics and Information Sciences, Osaka
Prefecture University, Sakai, 599-8531, Japan}
\email{kiriye@mi.s.osakafu-u.ac.jp}
\author{Daisuke Kishimoto}
\address{Department of Mathematics, Kyoto University, Kyoto, 606-8502, Japan}
\email{kishi@math.kyoto-u.ac.jp}
\thanks{K.I. is supported by JSPS KAKENHI (No. 26400094), and D.K. is supported by JSPS KAKENHI (No. 25400087)}
\subjclass[2010]{13F55, 55P15}
\keywords{Stanley-Reisner ring, Golod property, Taylor resolution, polyhedral product, fat wedge filtration}
\begin{document}

\maketitle

\begin{abstract}
Let $K$ be a simplicial complex $K$ such that the Taylor resolution for its Stanley-Reisner ring is minimal. We prove that the following conditions are equivalent: (1) $K$ is Golod; (2) any two minimal non-faces of $K$ are not disjoint; (3) the moment-angle complex for $K$ is homotopy equivalent to a wedge of spheres; (4) the decomposition of the suspension of the polyhedral product $\Z_K(C\underline{X},\underline{X})$ due to Bahri, Bendersky, Cohen and Gitler desuspends. 
\end{abstract}

\baselineskip 17pt


\section{Introduction}

Golodness is a property of a graded commutative ring $R$ which is originally defined by a certain equality involving a Poincar\'e series of the cohomology of $R$, and Golod \cite{G} gave an equivalent condition in terms of the derived torsion algebra of $R$. Golodness has been intensively studied for Stanley-Reisner rings since those of important simplicial complexes such as dual sequentially Cohen-Macaulay complexes are known to have the Golod property, and in this paper, we are interested in Golodness of Stanley-Reisner rings. So we here define Golodness of Stanley-Reisner rings in terms of their derived torsion algebras. Let $K$ be a simplicial complex on the vertex set $[m]:=\{1,\ldots,m\}$, and let $\Bbbk$ be a commutative ring. Recall that the Stanley-Reisner ring of $K$ over $\Bbbk$ is defined by
$$\Bbbk[K]:=\Bbbk[v_1,\ldots,v_m]/(v_I\,\vert\,I\subset[m],\,I\not\in K)$$
where $|v_i|=2$ and $v_I=v_{i_1}\cdots v_{i_k}$ for $I=\{i_1,\ldots,i_k\}$. We consider the derived algebra $\mathrm{Tor}_{\Bbbk[v_1,\ldots,v_m]}(\Bbbk[K],\Bbbk)$ and fix its products and (higher) Massey products to those induced from the Koszul resolution of $\Bbbk$ over $\Bbbk[v_1,\ldots,v_m]$ tensored with $\Bbbk[K]$. Let $R^+$ denote the positive degree part of a graded ring $R$.

\begin{definition}
The Stanley-Reisner ring $\Bbbk[K]$ is called Golod if all products and (higher) Massey products in $\mathrm{Tor}^+_{\Bbbk[v_1,\ldots,v_m]}(\Bbbk[K],\Bbbk)$ are trivial.
\end{definition}

One of the biggest problem in Golodness of Stanley-Reisner rings is to get a combinatorial characterization of Golodness, where we have many examples of interesting simplicial complexes. This is still open at this moment while there have been many attempts. Then we consider the following weaker problem.

\begin{problem}
\label{problem}
Find a class of simplicial complexes for which Golodness of Stanley-Reisner rings can be combinatorially characterized.
\end{problem}

In a seminal paper \cite{DJ}, Davis and Januszkiewicz showed that the cohomology with coefficient $\Bbbk$ of a certain space constructed from a simplicial complex $K$, called the Davis-Januszkiewicz space for $K$, is isomorphic to the Stanley-Reisner ring $\Bbbk[K]$. This opens a way of a topological study of Stanley-Reisner rings. Moreover, Baskakov, Buchstaber and Panov \cite{BBP} found an isomorphism between the cohomology with coefficient $\Bbbk$ of the space $\Z_K$, called the moment-angle complex for $K$, and the derived torsion algebra $\mathrm{Tor}^*_{\Bbbk[v_1,\ldots,v_m]}(\Bbbk[K],\Bbbk)$ which respects products and (higher) Massey products. Then we can study Golodness of Stanley-Reisner rings by investigating the homotopy types of moment-angle complexes. Thus there is a trinity in studying Golodness of Stanley-Reisner rings consisting of algebra, combinatorics and homotopy theory. 

In this paper, we consider Problem \ref{problem} under the above trinity, and we will prove the following, where the notation in the condition (4) will be defined later. Recall that a non-empty subset of the vertex set of a simplicial complex $K$ is a minimal non-face if $N\not\in K$ and $N-i\in K$ whenever $i\in N$. Put $[m]:=\{1,\ldots,m\}$.

\begin{theorem}
\label{main}
Let $K$ be a simplicial complex on the vertex set $[m]$ such that $\Bbbk[K]$ has a minimal Taylor resolution. Then the following conditions are equivalent:
\begin{enumerate}
\item $\Bbbk[K]$ is Golod;
\item any two minimal non-faces of $K$ are not disjoint;
\item the moment-angle complex for $K$ is homotopy equivalent to a wedge of spheres;
\item for any $\underline{X}=\{X_i\}_{i=1}^m$, there is a homotopy decomposition of a polyhedral product
$$\Z_K(C\underline{X},\underline{X})\simeq\bigvee_{\emptyset\ne I\subset[m]}|\Sigma K_I|\wedge\widehat{X}^I.$$
\end{enumerate}
\end{theorem}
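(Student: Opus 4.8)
The plan is to establish the cycle $(3)\Rightarrow(1)\Rightarrow(2)\Rightarrow(4)\Rightarrow(3)$, preceded by a combinatorial reduction. The key preliminary observation is that $\Bbbk[K]$ has a minimal Taylor resolution if and only if every minimal non-face $N$ of $K$ has a \emph{private vertex}, i.e.\ a vertex of $N$ lying in no other minimal non-face: the entries of the Taylor differential are the monomials $v^{\,\mathrm{lcm}(\sigma)-\mathrm{lcm}(\sigma\setminus i)}$, and for squarefree generators these are non-units exactly when $N_i\not\subseteq\bigcup_{j\neq i}N_j$. Thus the hypothesis is independent of $\Bbbk$, and under it the multigraded Taylor complex is the minimal free resolution of $\Bbbk[K]$; feeding this into the multigraded Hochster formula $\mathrm{Tor}^{-p}_{\Bbbk[v_1,\dots,v_m]}(\Bbbk[K],\Bbbk)_W\cong\widetilde H^{|W|-p-1}(K_W;\Bbbk)$ gives: $\widetilde H^*(K_W;\Bbbk)=0$ unless $W=N_{i_1}\cup\dots\cup N_{i_p}$ for some minimal non-faces, and since private vertices make $\{i_1,\dots,i_p\}\mapsto N_{i_1}\cup\dots\cup N_{i_p}$ injective, in the nonzero case $\widetilde H^*(K_W)$ is concentrated in a single degree and has rank $\le 1$. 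Taking $\Bbbk=\ZZ$, it follows that each $|\Sigma K_W|$ is homotopy equivalent either to a single sphere or to a point.

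For $(3)\Rightarrow(1)$: a wedge of spheres is a suspension, so its reduced cohomology has trivial products and trivial higher Massey products, and by the Baskakov--Buchstaber--Panov isomorphism $\Bbbk[K]$ is Golod. For $(1)\Leftrightarrow(2)$ I argue in the torsion algebra. Since $\mathrm{Tor}_{\Bbbk[v_1,\dots,v_m]}(\Bbbk[K],\Bbbk)$ is concentrated in squarefree multidegrees, a product or Massey product of positive-degree classes of multidegrees $I,J,\dots$ vanishes automatically once two of $I,J,\dots$ overlap; and when they are pairwise disjoint, the preliminary reduction shows each of $I,J,\dots$ contains a minimal non-face, contradicting $(2)$ — so $(2)$ forces all products and Massey products in $\mathrm{Tor}^+$ to vanish, which is $(1)$. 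Conversely, if $N_1\cap N_2=\emptyset$ put $W=N_1\sqcup N_2$; the private-vertex description shows no minimal non-face other than $N_1,N_2$ lies inside $W$, so $K_W=\partial\Delta_{N_1}\ast\partial\Delta_{N_2}$ and $\Z_{K_W}=\Z_{\partial\Delta_{N_1}}\times\Z_{\partial\Delta_{N_2}}$ is a product of odd-dimensional spheres of positive dimension, which has a nonzero cup product. As $K_W$ is a full subcomplex, $\Z_{K_W}$ is a retract of $\Z_K$, so pulling this product back along the retraction shows $\Bbbk[K]$ is not Golod; hence $(1)\Rightarrow(2)$.

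The substantial step is $(2)\Rightarrow(4)$, for which I would use the fat wedge filtration of the polyhedral product $\Z_K(C\underline{X},\underline{X})$: a filtration of this space whose successive quotients are, after a single suspension, exactly the Bahri--Bendersky--Cohen--Gitler summands $|\Sigma K_I|\wedge\widehat{X}^I$. The decomposition desuspends precisely when all the connecting maps of the filtration are null-homotopic, and these connecting maps are (higher) Whitehead-type products assembled from the canonical maps $|\Sigma K_I|\wedge\widehat{X}^I\to\Z_K(C\underline{X},\underline{X})$. The preliminary reduction is what makes this work: each $|\Sigma K_I|$ is a sphere or contractible, so the canonical maps are genuine higher Whitehead product maps and the obstructions to desuspension lie in well-understood homotopy groups. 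Condition $(2)$, together again with the private-vertex bookkeeping, forces the multidegrees of any two summands that could enter a common obstruction to overlap — otherwise two disjoint minimal non-faces appear — so that every obstruction lands in a group which the $\mathrm{Tor}$-computation shows to vanish. Carrying this induction through yields $\Z_K(C\underline{X},\underline{X})\simeq\bigvee_{\emptyset\neq I\subseteq[m]}|\Sigma K_I|\wedge\widehat{X}^I$.

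Finally $(4)\Rightarrow(3)$ follows by specializing to $X_i=S^1$ for all $i$: then $C\underline{X}=\underline{D^2}$, $\Z_K(C\underline{X},\underline{X})=\Z_K$, $\widehat{X}^I=S^{|I|}$, and $(4)$ reads $\Z_K\simeq\bigvee_I|\Sigma K_I|\wedge S^{|I|}$, which is a wedge of spheres since each $|\Sigma K_I|$ is a sphere or contractible. The step I expect to fight hardest with is $(2)\Rightarrow(4)$: setting up the fat wedge filtration so that its subquotients match the claimed summands, constructing the maps $|\Sigma K_I|\wedge\widehat{X}^I\to\Z_K(C\underline{X},\underline{X})$ coherently, and verifying stage by stage that the connecting map is null — it is there that the combinatorial hypotheses (private vertices and pairwise intersecting minimal non-faces) have to be converted into the vanishing of concrete homotopy-theoretic obstructions.
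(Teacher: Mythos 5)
Your overall strategy — reduce to the private-vertex/minimal-Taylor picture, establish $(3)\Rightarrow(1)$ via Baskakov–Buchstaber–Panov, $(1)\Rightarrow(2)$ via the $\partial\Delta^{N_1}*\partial\Delta^{N_2}$ retract, push $(2)\Rightarrow(4)$ through triviality of the fat wedge filtration, and specialize to $X_i=S^1$ for $(4)\Rightarrow(3)$ — matches the paper's architecture, and your Hochster-formula derivation that each $|\Sigma K_W|$ is a sphere or a point is a clean alternative to the paper's combinatorial induction on $|W|$ for the same fact. Your extra direct $(2)\Rightarrow(1)$ argument in the multigraded torsion algebra (products and Massey products are forced into non-squarefree multidegrees and hence vanish) is redundant in your cycle but is a genuinely different, algebraic route — it is essentially the mechanism behind the Frankhuizen result cited in the paper's remark.

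However, there is a genuine gap in $(2)\Rightarrow(4)$, which you yourself flag as the step you would "fight hardest with," and the fight is not won in what you have written. You assert that the attaching maps of the fat wedge filtration are null because "every obstruction lands in a group which the $\mathrm{Tor}$-computation shows to vanish." This conflates cohomology with homotopy: the obstruction is the \emph{homotopy class} of a map $\varphi_{K_I}\colon|K_I|\to\RZ_K^{|I|-1}$, and the vanishing of certain $\mathrm{Tor}$-groups tells you nothing a priori about the homotopy groups of the filtration stage $\RZ_K^{|I|-1}$, which is in general far from a wedge of spheres before you have established the very triviality you want. The paper's actual argument for this step is geometric, not cohomological: using the private-vertex structure it replaces $\mathtt{N}$ by the pairwise-disjoint refinement $\mathtt{M}$ with $M_i=N_i\setminus(N_1\cup\cdots\cup N_{i-1})$, identifies $\RZ_{K(\mathtt{M})}$ as a product of spheres, and then uses condition $(2)$ to pick $w_i\in N_1\cap N_i$ so that each coordinate $i\ge 2$ of the troublesome fat-wedge piece contracts inside a larger sub-polyhedral product $\RZ_{K(\mathtt{M}^i)_{U\cup w_i}}$ sitting in the filtration stage; a cofibration/retractile-subcomplex argument (James) then promotes this stage-by-stage contractibility to nullity of $\varphi_{K(\mathtt{N})}$, and finally one runs an induction over links and deletions to kill all $\varphi_{K_I}$. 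None of this is recoverable from a $\mathrm{Tor}$-vanishing statement; you would need to supply this geometric contraction argument (or some equivalent) for your $(2)\Rightarrow(4)$ to be a proof.
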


\begin{remark}
\begin{enumerate}
\item In Theorem \ref{main}, Golodness does not depend on the ground ring, but in general, this is not true as in \cite{K1,IK2}. We will see in the next section that minimality of the Taylor resolution of $\Bbbk[K]$ does not depend on $\Bbbk$, so in fact, Theorem \ref{main} does not depend on $\Bbbk$.
\item Recently, Frankhuize \cite{F} proved the equivalence between (1) and (2) in a more general setting by a purely algebraic manner.
\end{enumerate}
\end{remark}

Throughout this paper, let $K$ denote a simplicial complex on the vertex set $[m]$, where $K$ might have ghost vertices. 


\section{Minimality of the Taylor resolutions}

In this section, we recall the definition of the Taylor resolution for a Stanley-Reisner ring and a combinatorial characterization of its minimality due to Ayzenberg \cite{A}. We then prove the implication (1) $\Rightarrow$ (2) of Theorem \ref{main}.

Let $N_1,\ldots,N_r$ be minimal non-faces of $K$. Then we have
$$\Bbbk[K]=\Bbbk[v_1,\ldots,v_m]/(v_{N_1},\ldots,v_{N_r}).$$
The Taylor resolution for $\Bbbk[K]$ is the free $\Bbbk[v_1,\ldots,v_m]$-module resolution 
$$\cdots\xrightarrow{d}R^{-\ell}\xrightarrow{d}R^{-\ell+1}\xrightarrow{d}\cdots\xrightarrow{d}R^0=\Bbbk[v_1,\ldots,v_m]\xrightarrow{\rm proj}\Bbbk[K]$$
such that $R^{-\ell}$ is the free $\Bbbk[v_1,\ldots,v_m]$-module generated by symbols $w_{i_1,\ldots,i_\ell}$ for $1\le i_1<\cdots<i_\ell\le m$ with the differential
$$d(w_{i_1,\ldots,i_\ell})=\sum_{k=1}^\ell(-1)^{k+1}v_{N_{i_k}-N_{i_1}\cup\cdots\widehat{N_{i_k}}\cup\cdots\cup N_{i_\ell}}w_{i_1,\ldots,\widehat{i_k},\ldots,i_\ell}$$
where we set $v_\emptyset=1$. 
As usual, we say that the Taylor resolution is minimal if the differential satisfies
$$d\otimes_{\Bbbk[v_1,\ldots,v_m]}\Bbbk=0.$$
By definition, minimality of the Taylor resolution for $\Bbbk[K]$ does not depend on the ground ring $\Bbbk$, so we say that $K$ has a minimal Taylor resolution if the Taylor resolution for $\Bbbk[K]$ is minimal for some $\Bbbk$. Minimality of the Taylor resolution for $\Bbbk[K]$ can be readily translated combinatorially as: 

\begin{proposition}
[Ayzenberg \cite{A}]
\label{minimal}
Let $N_1,\ldots,N_r$ be minimal non-faces of $K$. Then $K$ has a minimal Taylor resolution if and only if
$$N_i\not\subset\bigcup_{k\ne i}N_k\qquad\text{for all }i.$$
\end{proposition}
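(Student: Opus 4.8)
The plan is to unwind the definition of minimality directly into the stated set-theoretic condition. By definition, the Taylor resolution is minimal precisely when $d\otimes_{\Bbbk[v_1,\ldots,v_m]}\Bbbk=0$, that is, when every entry of every differential matrix lies in the irrelevant ideal $(v_1,\ldots,v_m)$. From the displayed formula for $d$, the coefficient of the basis element $w_{i_1,\ldots,\widehat{i_k},\ldots,i_\ell}$ in $d(w_{i_1,\ldots,i_\ell})$ is $(-1)^{k+1}v_J$ with
$$J=N_{i_k}-(N_{i_1}\cup\cdots\widehat{N_{i_k}}\cup\cdots\cup N_{i_\ell}),$$
and the image of $v_J$ in $\Bbbk=\Bbbk[v_1,\ldots,v_m]/(v_1,\ldots,v_m)$ is $0$ when $J\neq\emptyset$ and is $1$ when $J=\emptyset$. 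Hence the Taylor resolution is minimal if and only if
$$N_{i_k}\not\subset N_{i_1}\cup\cdots\widehat{N_{i_k}}\cup\cdots\cup N_{i_\ell}$$
for every $1\le i_1<\cdots<i_\ell\le r$ and every $k\in\{1,\ldots,\ell\}$; call this family of conditions $(\star)$.

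It then remains to show that $(\star)$ is equivalent to the single family $N_i\not\subset\bigcup_{k\ne i}N_k$, $i=1,\ldots,r$. For the less obvious implication, assume $N_i\not\subset\bigcup_{k\ne i}N_k$ for all $i$; then, given any tuple $i_1<\cdots<i_\ell$ and any $k$, the union $N_{i_1}\cup\cdots\widehat{N_{i_k}}\cup\cdots\cup N_{i_\ell}$ is contained in $\bigcup_{j\ne i_k}N_j$, so $N_{i_k}$ is a fortiori not contained in it, and $(\star)$ holds. Conversely, specializing $(\star)$ to $\ell=r$ and $(i_1,\ldots,i_\ell)=(1,\ldots,r)$ yields exactly $N_i\not\subset\bigcup_{k\ne i}N_k$ for each $i$. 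Putting the two steps together gives the proposition.

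Both steps are essentially formal, so I do not anticipate a genuine obstacle. The only minor points to watch are: one should invoke, as a black box, the classical fact that the Taylor complex is always a free resolution of $\Bbbk[K]$ over $\Bbbk[v_1,\ldots,v_m]$ (so that ``minimal'' carries its usual meaning and nothing about exactness needs reproving); ghost vertices of $K$ may be ignored throughout, since they lie in no minimal non-face; and the degenerate case $r=0$, where $K$ is a full simplex, is vacuous on both sides of the equivalence.
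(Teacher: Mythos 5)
Your proof is correct and is exactly the ``ready translation'' the paper alludes to without writing out: the paper states Proposition~\ref{minimal} as a direct combinatorial reading of the minimality condition $d\otimes_{\Bbbk[v_1,\ldots,v_m]}\Bbbk=0$ applied to the explicit Taylor differential, and you unwind that definition precisely (the coefficient $v_J$ vanishes mod the irrelevant ideal iff $J\ne\emptyset$, i.e.\ iff the relevant containment fails), then reduce the a priori stronger ``all subtuples'' condition to the single top case $\ell=r$ and observe the converse by monotonicity of unions. The only cosmetic slip is that in one display you copy the paper's typo ``$1\le i_1<\cdots<i_\ell\le m$''; the indices should of course run up to $r$, the number of minimal non-faces, as you in fact use in the rest of the argument.
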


Ayzenberg \cite{A} constructed a new simplicial complex with a minimal Taylor resolution from any given simplicial complex, and we here generalize his construction. Let $\mathtt{N}=\{N_1,\ldots,N_r\}$ be a sequence of subsets of a finite set $W$, where we allow $N_i=N_j$ for some $i\ne j$ and call $W$ the ground set of $\mathtt{N}$. By introducing new distinct points $a_1,\ldots,a_r$, we put $\widetilde{N}_i=N_i\sqcup\{a_i\}$ and $V=W\sqcup\{a_1,\ldots,a_r\}$. Define $K(\mathtt{N})$ to be the simplicial complex on the vertex set $V$ whose minimal non-faces are $\widetilde{N}_1,\ldots,\widetilde{N}_r$. Then since $\widetilde{N}_i\not\subset\bigcup_{k\ne i}\widetilde{N}_k$ for all $i$, we have the following by Proposition \ref{minimal}. 

\begin{corollary}
\label{K(N)-minimal}
$K(\mathtt{N})$ has a minimal Taylor resolution.
\end{corollary}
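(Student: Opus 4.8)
The plan is to apply Proposition \ref{minimal} directly; the whole point is that the auxiliary vertices $a_1,\ldots,a_r$ are exactly what is needed to verify the hypothesis of that proposition. First I would check that $\widetilde{N}_1,\ldots,\widetilde{N}_r$ really are the minimal non-faces of $K(\mathtt{N})$, so that the complex is well defined by the stated prescription: since $a_i\in\widetilde{N}_i$ while $a_i\notin\widetilde{N}_k$ for every $k\ne i$, the sets $\widetilde{N}_1,\ldots,\widetilde{N}_r$ form an antichain under inclusion (in particular $\widetilde{N}_i\ne\widetilde{N}_j$ for $i\ne j$, even when $N_i=N_j$). Hence none of them is redundant, and the simplicial complex on $V$ having these as its minimal non-faces is indeed $K(\mathtt{N})$.

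Next, the very same observation gives the inclusion hypothesis of Proposition \ref{minimal}: for each $i$ the vertex $a_i$ lies in $\widetilde{N}_i$ but in no $\widetilde{N}_k$ with $k\ne i$, so $a_i\notin\bigcup_{k\ne i}\widetilde{N}_k$ and therefore $\widetilde{N}_i\not\subset\bigcup_{k\ne i}\widetilde{N}_k$. Proposition \ref{minimal}, applied to the minimal non-faces $\widetilde{N}_1,\ldots,\widetilde{N}_r$ of $K(\mathtt{N})$, then yields that the Taylor resolution of $\Bbbk[K(\mathtt{N})]$ is minimal, i.e. $K(\mathtt{N})$ has a minimal Taylor resolution. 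There is essentially no obstacle here; the only step that deserves a word of care is confirming that $\widetilde{N}_1,\ldots,\widetilde{N}_r$ form an antichain, which is what makes the defining description of $K(\mathtt{N})$ unambiguous and is also precisely the input Proposition \ref{minimal} requires.
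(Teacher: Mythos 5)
Your proof is correct and is essentially the paper's own argument: observe that $a_i\in\widetilde{N}_i$ but $a_i\notin\widetilde{N}_k$ for $k\ne i$, conclude that $\widetilde{N}_i\not\subset\bigcup_{k\ne i}\widetilde{N}_k$ for every $i$, and apply Proposition~\ref{minimal}. The additional remark that the $\widetilde{N}_i$ form an antichain (so the defining prescription of $K(\mathtt{N})$ is unambiguous) is a reasonable bit of extra care, but it is the same route.
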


Notice that any simplicial complex is determined by its minimal non-faces.

\begin{proposition}
\label{K(N)}
If $K$ has a minimal Taylor resolution, then there is a sequence $\mathtt{N}$ of subsets of a finite set $W$ such that $K\cong K(\mathtt{N})$.
\end{proposition}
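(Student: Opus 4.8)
The plan is to reverse-engineer the construction of $K(\mathtt{N})$ from a given $K$ with minimal Taylor resolution. Suppose $K$ is a simplicial complex on $[m]$ with minimal non-faces $N_1,\ldots,N_r$; by Proposition \ref{minimal}, minimality of the Taylor resolution gives $N_i\not\subset\bigcup_{k\ne i}N_k$ for all $i$, so we may choose a vertex $a_i\in N_i\setminus\bigcup_{k\ne i}N_k$ for each $i$. These chosen vertices are automatically distinct: if $a_i=a_j$ with $i\ne j$, then $a_i\in N_j\subset\bigcup_{k\ne i}N_k$, a contradiction. The idea is now to let $W=[m]\setminus\{a_1,\ldots,a_r\}$ and define $\mathtt{N}=\{N_1',\ldots,N_r'\}$ by $N_i'=N_i\setminus\{a_i\}=N_i\cap W$, so that, by construction, $\widetilde{N_i'}=N_i'\sqcup\{a_i\}=N_i$ and the ground set $V$ of $\mathtt{N}$ equals $W\sqcup\{a_1,\ldots,a_r\}=[m]$.

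First I would verify that with these definitions, $K(\mathtt{N})$ is literally the simplicial complex on $[m]$ whose minimal non-faces are $N_1,\ldots,N_r$, which is exactly the defining minimal-non-face set of $K$. Since any simplicial complex is determined by its minimal non-faces (as noted just before the statement), this yields $K\cong K(\mathtt{N})$ — indeed $K=K(\mathtt{N})$ on the nose, once we fix the labeling of the vertex set. The only subtlety to check is that passing through the construction of $K(\mathtt{N})$ does not secretly alter the collection of minimal non-faces: by definition $K(\mathtt{N})$ has minimal non-faces $\widetilde{N_1'},\ldots,\widetilde{N_r'}$, and we have just arranged that this list coincides with $N_1,\ldots,N_r$. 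One should also confirm that $a_i\in W^c$ implies $N_i'\subset W$ genuinely, i.e. that no $N_i'$ accidentally still contains some $a_j$; but that is fine because we only removed $a_i$ from $N_i$, and we do not need $N_i'$ to avoid the other $a_j$'s — the construction of $K(\mathtt{N})$ re-adjoins the $a_i$'s as new points, and here they are "new" only in the sense of being the distinguished points, which they already are.

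Actually the cleanest route avoids even that worry: take $W=[m]$ itself as the ground set and $\mathtt{N}=\{N_1,\ldots,N_r\}$, but realize $K(\mathtt{N})$ only up to isomorphism. Concretely, after choosing the distinct vertices $a_i\in N_i\setminus\bigcup_{k\ne i}N_k$, relabel $[m]$ so that the construction $W_0 = [m]\setminus\{a_1,\dots,a_r\}$ and $N_i^{(0)} = N_i \cap W_0$ feeds into $K(\mathtt N)$ and recovers $K$; the isomorphism is the identity on $[m]$. The main — and really only — obstacle is the bookkeeping: making sure the formal construction $\widetilde{N}_i = N_i \sqcup \{a_i\}$, $V = W \sqcup \{a_1,\dots,a_r\}$ is matched up correctly with the decomposition of $[m]$ into "old" vertices $W$ and "distinguished" vertices $a_i$, and that the distinctness of the $a_i$ (which is what makes $K(\mathtt{N})$ well-defined and forces its Taylor resolution to be minimal) is genuinely supplied by Proposition \ref{minimal}. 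No hard content beyond that; the proposition is essentially an observation packaging Ayzenberg's construction into a characterization.
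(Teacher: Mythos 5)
Your core argument is the same as the paper's: use Proposition \ref{minimal} to choose $a_i\in N_i-\bigcup_{k\ne i}N_k$, observe the $a_i$ are distinct, set $W:=[m]-\{a_1,\ldots,a_r\}$ and $\mathtt{N}:=\{N_1-a_1,\ldots,N_r-a_r\}$, and note that $K(\mathtt{N})$ then has precisely $N_1,\ldots,N_r$ as its minimal non-faces on the vertex set $[m]$, hence $K=K(\mathtt{N})$. That much is correct and is exactly the paper's proof.

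Where you go astray is in resolving the ``subtlety'' you raise. You are right to ask whether $N_i-a_i\subset W$, i.e.\ whether $N_i-a_i$ could still contain some $a_j$ with $j\ne i$; but your explanation --- that it ``doesn't matter'' because the construction re-adjoins the $a_i$'s as new points --- is not a valid resolution. The definition of $K(\mathtt{N})$ requires $V=W\sqcup\{a_1,\ldots,a_r\}$ to be a \emph{disjoint} union, so if some $a_j$ sat inside $N_i-a_i\subset W$ the construction would not even parse. The actual (and one-line) reason there is no problem is that $a_j\in N_j-\bigcup_{k\ne j}N_k$ forces $a_j\notin N_i$ for every $i\ne j$, so $N_i-a_i$ automatically avoids all the $a_j$'s and lands in $W$. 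Your closing ``cleanest route'' remark is also misleading: taking $W=[m]$ and $\mathtt{N}=\{N_1,\ldots,N_r\}$ produces a complex on $m+r$ vertices with minimal non-faces $N_i\sqcup\{a_i\}$, which is not isomorphic to $K$; the ``concretely\ldots'' sentence that follows simply reverts to the original $W_0=[m]-\{a_1,\ldots,a_r\}$ construction, so the paragraph contradicts its own opening line.
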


\begin{proof}
Let $N_1,\ldots,N_r$ be all minimal non-faces of $K$. By Proposition \ref{minimal} there exists $a_i\in N_i-\bigcup_{k\ne i}N_k$ for all $i$, where $a_1,\ldots,a_r$ are distinct. Put $W:=[m]-\{a_1,\ldots,a_r\}$. If we put $\mathtt{N}=\{N_1-a_1,\ldots,N_r-a_r\}$ which is a sequence of subsets of $W$, we have $K=K(\mathtt{N})$ as desired.
\end{proof}

We prove the implication (1) $\Rightarrow$ (2) of Theorem \ref{main}. For this, we use the following lemma, where the proof will be given in the next section. For a subset $I\subset[m]$, we put
$$K_I:=\{\sigma\in K\,\vert\,\sigma\subset I\}.$$

\begin{lemma}
\label{join}
If $K_{I\cup J}=\partial\Delta^I*\partial\Delta^J$ for some non-empty $I,J\subset[m]$ with $I\cap J=\emptyset$, then $K$ is not Golod, where $\Delta^S$ denotes the full simplex on a finite set $S$.
\end{lemma}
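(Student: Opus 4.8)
The plan is to exhibit a non-trivial product in $\mathrm{Tor}^+_{\Bbbk[v_1,\ldots,v_m]}(\Bbbk[K],\Bbbk)$, using the Baskakov–Buchstaber–Panov isomorphism with $\widetilde{H}^*(\Z_K;\Bbbk)$ and the standard combinatorial description of this cohomology. Recall that $\widetilde{H}^*(\Z_K;\Bbbk)\cong\bigoplus_{L\subset[m]}\widetilde{H}^{*-|L|-1}(K_L;\Bbbk)$, and that under this identification the product of classes supported on $K_L$ and $K_{L'}$ with $L\cap L'=\emptyset$ is, up to sign, the map induced in reduced cohomology by the canonical inclusion $K_{L\cup L'}\hookrightarrow K_L*K_{L'}$ followed by the cohomology cross product (join) isomorphism $\widetilde{H}^{p}(K_L)\otimes\widetilde{H}^{q}(K_{L'})\xrightarrow{\ \cong\ }\widetilde{H}^{p+q+1}(K_L*K_{L'})$. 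So it suffices to find classes on $K_I$ and $K_J$ whose external product survives the restriction map $\widetilde{H}^*(K_I*K_J)\to\widetilde{H}^*(K_{I\cup J})$.

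Now I use the hypothesis $K_{I\cup J}=\partial\Delta^I*\partial\Delta^J$. First, $K_I=(K_{I\cup J})_I=\partial\Delta^I$ and likewise $K_J=\partial\Delta^J$, so $K_I\simeq S^{|I|-2}$ and $K_J\simeq S^{|J|-2}$ each carry a non-zero reduced cohomology class, say $\alpha$ in degree $|I|-2$ and $\beta$ in degree $|J|-2$. Their join carries the class $\alpha*\beta\ne 0$ in $\widetilde{H}^{|I|+|J|-3}(\partial\Delta^I*\partial\Delta^J)$, since the join of two spheres is a sphere and the cross product is an isomorphism. But by hypothesis $\partial\Delta^I*\partial\Delta^J$ is exactly $K_{I\cup J}$, so the restriction map $\widetilde{H}^*(K_I*K_J)\to\widetilde{H}^*(K_{I\cup J})$ is the identity and $\alpha*\beta$ pulls back to a non-zero class. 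Translating back through the BBP isomorphism, the corresponding classes $\widetilde\alpha\in\widetilde{H}^{|I|-2+|I|+1}(\Z_K)$ and $\widetilde\beta$ have product $\widetilde\alpha\cdot\widetilde\beta$ equal (up to sign) to the non-zero class supported on $K_{I\cup J}$. Hence a non-trivial product exists in $\mathrm{Tor}^+$, so $\Bbbk[K]$ is not Golod.

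The main point to get right is the precise statement that the product on $\widetilde{H}^*(\Z_K)$, restricted to classes supported on disjoint subsets $L,L'$, factors as (join/cross product) followed by (restriction along $K_{L\cup L'}\hookrightarrow K_L*K_{L'}$); this is a known fact (see Baskakov–Buchstaber–Panov, or Panov's notes) and I would cite it rather than reprove it. One should also be mildly careful with the degenerate cases: if $|I|=1$ then $\partial\Delta^I=\emptyset$ and $\widetilde{H}^{-1}(\emptyset)=\Bbbk$, which is exactly the reduced cohomology of $S^{-1}$, so the argument goes through uniformly with the convention $S^{-1}=\emptyset$; the case $|I|,|J|\ge 2$ is the generic one. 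I expect no real obstacle beyond bookkeeping the degree shifts and signs, since the geometric content is simply that a full-dimensional product class cannot vanish when the subcomplex in question is literally the whole join.
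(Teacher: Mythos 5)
Your proof is correct, but it takes a genuinely different route from the paper's. The paper argues at the level of moment-angle complexes of full subcomplexes: it uses $\Z_{K*L}=\Z_K\times\Z_L$ and $\Z_{\partial\Delta^W}=S^{2|W|-1}$ to identify $\Z_{K_{I\cup J}}$ with $S^{2|I|-1}\times S^{2|J|-1}$, then notes that $\Z_{K_{I\cup J}}$ is a (topological) retract of $\Z_K$, so the non-trivial cup product coming from the product of spheres persists in $H^*(\Z_K;\Bbbk)$; Golodness then fails by Theorem~\ref{BBP}. You instead work inside the Hochster-type decomposition $\widetilde{H}^*(\Z_K;\Bbbk)\cong\bigoplus_L\widetilde{H}^{*-|L|-1}(K_L;\Bbbk)$ and the Baskakov--Buchstaber--Panov combinatorial cup-product formula (join followed by restriction $K_{L\cup L'}\hookrightarrow K_L*K_{L'}$), pinpointing explicit classes supported on $K_I=\partial\Delta^I$ and $K_J=\partial\Delta^J$ and observing that the restriction map is the identity under the hypothesis. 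The paper's argument is shorter and space-level, needing only the retraction of moment-angle complexes over full subcomplexes and that a product of spheres has a non-trivial cup product; yours is more granular and makes the multiplying classes explicit, at the cost of invoking the more technical combinatorial description of the product in the Hochster decomposition. Both are sound; your handling of the degenerate case $|I|=1$ via $S^{-1}=\emptyset$ is the same convention the paper uses elsewhere.
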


We here record an obvious fact of minimal non-faces, where we omit the proof. For a simplex $\sigma\in K$, let $\mathrm{lk}_K(\sigma)$ denote the link of $\sigma$ in $K$. 

\begin{lemma}
\label{min-non-face}
Let $N_1,\ldots,N_r$ be minimal non-faces of $K$.
\begin{enumerate}
\item For a simplex $\sigma\in K$, any minimal non-face of $\mathrm{lk}_K(\sigma)$ has the form $N_i-\sigma$ for some $i$.
\item For a subset $I\subset[m]$, minimal non-faces of $K_I$ are $N_i$'s with $N_i\subset I$.
\end{enumerate}
\end{lemma}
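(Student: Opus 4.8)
The plan is to argue both parts directly from the defining property of a minimal non-face: a non-empty set $N$ is a minimal non-face of a complex $L$ exactly when $N\notin L$ while $N-j\in L$ for every $j\in N$. No earlier result is needed beyond the definitions of the link and of $K_I$.

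For (1), write $L=\mathrm{lk}_K(\sigma)$ and let $M$ be a minimal non-face of $L$; since $M$ is a subset of the vertex set of $L$ we have $M\cap\sigma=\emptyset$. First I would note that $M\cup\sigma\notin K$: indeed, if $M\cup\sigma$ were a face of $K$, then together with $M\cap\sigma=\emptyset$ this would give $M\in L$, a contradiction. Hence $M\cup\sigma$ contains some minimal non-face $N_i$ of $K$, so $N_i\subset M\cup\sigma$. Now fix $j\in M$. Then $M-j\in L$, so $(M-j)\cup\sigma=(M\cup\sigma)-j$ is a face of $K$, and therefore $N_i\not\subset(M\cup\sigma)-j$. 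Combined with $N_i\subset M\cup\sigma$, this forces $j\in N_i$. As $j\in M$ was arbitrary we get $M\subset N_i$, and since $M\cap\sigma=\emptyset$ this upgrades to $M\subset N_i-\sigma$. On the other hand $N_i\subset M\cup\sigma$ immediately gives $N_i-\sigma\subset M$. Hence $M=N_i-\sigma$, as claimed.

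For (2), let $M$ be a minimal non-face of $K_I$; by definition $M\subset I$. Because $K_I=\{\tau\in K\,\vert\,\tau\subset I\}$ and $M\subset I$, the condition $M\notin K_I$ is equivalent to $M\notin K$, and for each $j\in M$ the condition $M-j\in K_I$ is equivalent to $M-j\in K$ (the inclusion $M-j\subset I$ being automatic). Thus $M$ is a minimal non-face of $K$, i.e.\ $M=N_i$ for some $i$, and by construction $N_i=M\subset I$. Conversely, if $N_i\subset I$ then $N_i\notin K$ gives $N_i\notin K_I$, while for each $j\in N_i$ we have $N_i-j\in K$ and $N_i-j\subset I$, hence $N_i-j\in K_I$; so $N_i$ is a minimal non-face of $K_I$.

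Both arguments are elementary set manipulations, so there is no real obstacle; the only points requiring mild care are the bookkeeping in (1)—extracting the equality $M=N_i-\sigma$ rather than just an inclusion—and the harmless convention that the vertices of $\sigma$ are not vertices of $\mathrm{lk}_K(\sigma)$, which the possible presence of ghost vertices does not disturb.
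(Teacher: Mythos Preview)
Your proof is correct; both parts are handled cleanly and the bookkeeping in (1) extracting the equality $M=N_i-\sigma$ is sound. The paper itself omits the proof entirely, calling the lemma an obvious fact, so your argument supplies precisely the elementary verification the authors left to the reader.
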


\begin{proposition}
\label{1->2}
Suppose $K$ has a minimal Taylor resolution. If $\Bbbk[K]$ is Golod, then any two minimal non-faces of $K$ are not disjoint.
\end{proposition}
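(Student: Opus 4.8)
The plan is to prove the contrapositive: assuming $K$ has a minimal Taylor resolution and that two minimal non-faces of $K$ are disjoint, I will produce a full subcomplex of $K$ of the form $\partial\Delta^I * \partial\Delta^J$ and invoke Lemma~\ref{join} to conclude that $\Bbbk[K]$ is not Golod. So suppose $N_i$ and $N_j$ are minimal non-faces of $K$ with $N_i \cap N_j = \emptyset$. The natural candidate for a Golodness obstruction is the full subcomplex $K_{N_i \cup N_j}$.

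First I would apply Lemma~\ref{min-non-face}(2): the minimal non-faces of $K_{N_i\cup N_j}$ are exactly those $N_k$ with $N_k \subset N_i \cup N_j$. The key point is that minimality of the Taylor resolution, via Proposition~\ref{minimal}, forces $N_k \not\subset \bigcup_{\ell \ne k} N_\ell$ for all $k$; in particular no minimal non-face other than $N_i$ itself can be contained in $N_i$ (taking $\ell = i$ in the union is not allowed, but any $N_k \subsetneq N_i$ with $k \ne i$ would violate minimality of $N_i$ as a non-face anyway, since $N_i$ minimal means $N_i - v \in K$ for all $v \in N_i$, hence every proper subset of $N_i$ is a face). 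The real content is to rule out a third minimal non-face $N_k$ that straddles both $N_i$ and $N_j$, i.e. meets each of them nontrivially but is contained in their union. Here is where I expect the main obstacle to lie: a priori such an $N_k$ could exist. I would argue it cannot by using Proposition~\ref{minimal} more carefully — if $N_k \subset N_i \cup N_j$ with $N_k \not\subset N_i$ and $N_k \not\subset N_j$, then since $N_i, N_j$ are themselves minimal non-faces contained in $N_i \cup N_j$, one checks that $N_k \subset N_i \cup N_j = (N_i \cup N_j)$ already realizes $N_k$ inside $\bigcup_{\ell} N_\ell$ in a way that, combined with $N_i \cap N_j = \emptyset$, is meant to contradict $N_i \not\subset \bigcup_{\ell\ne i} N_\ell$ or $N_j \not\subset \bigcup_{\ell \ne j} N_\ell$. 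Concretely: $N_i \subset N_i \cup N_j$, and if some $N_k$ with $k \ne i, j$ satisfies $N_j \subset N_i \cup N_k$ we would be done deriving a contradiction to minimality; the disjointness of $N_i$ and $N_j$ is exactly what prevents the "crossing" non-face from fitting.

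Once it is established that the only minimal non-faces of $K_{N_i \cup N_j}$ are $N_i$ and $N_j$ (and possibly repeats, but a simplicial complex is determined by its distinct minimal non-faces), the identification $K_{N_i \cup N_j} = \partial\Delta^{N_i} * \partial\Delta^{N_j}$ is immediate from the fact that a simplicial complex is determined by its minimal non-faces together with $N_i \cap N_j = \emptyset$: a subset $\sigma \subset N_i \cup N_j$ is a face iff it contains neither $N_i$ nor $N_j$, iff $\sigma \cap N_i \subsetneq N_i$ and $\sigma \cap N_j \subsetneq N_j$, which is precisely the join condition. Then Lemma~\ref{join} applied with $I = N_i$, $J = N_j$ gives that $K$ is not Golod, completing the contrapositive.

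I should double-check the edge case where $N_i \cup N_j = [m]$ versus being a proper subset — but since $K_{N_i \cup N_j}$ is a full subcomplex and Lemma~\ref{join} is stated for the ambient $K$ with arbitrary non-empty disjoint $I, J \subset [m]$, no separate argument is needed. The one genuinely delicate step remains the exclusion of a straddling minimal non-face $N_k$; I would isolate this as the crux and phrase it as: \emph{by Proposition~\ref{minimal}, for the index $i$ we have $N_i \not\subset \bigcup_{\ell \ne i} N_\ell$, so in particular $N_i \not\subset N_j \cup N_k$ for any other $N_k \subset N_i \cup N_j$; combined with $N_i \cap N_j = \emptyset$ this forces such an $N_k$ to omit a vertex of $N_i$ that lies in no other $N_\ell$, contradicting $N_k \subset N_i \cup N_j$ unless $N_k \subset N_j$}, and symmetrically. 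That is the heart of the matter and where I would spend the most care in the full write-up.
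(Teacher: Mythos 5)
Your overall strategy is exactly the paper's: argue the contrapositive, identify $K_{N_i\cup N_j}=\partial\Delta^{N_i}*\partial\Delta^{N_j}$ using Lemma~\ref{min-non-face}(2), and then invoke Lemma~\ref{join}. The reduction to ruling out a third ``straddling'' minimal non-face $N_k\subset N_i\cup N_j$ with $k\ne i,j$ is also the right crux. But the argument you sketch for that crux is where things go wrong, and the gap is genuine.

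You apply Proposition~\ref{minimal} to the indices $i$ and $j$, deduce that $N_k$ must omit a vertex of $N_i$ (and symmetrically a vertex of $N_j$), and then claim this ``contradicts $N_k\subset N_i\cup N_j$ unless $N_k\subset N_j$.'' That inference is a non sequitur: a set $N_k$ can omit one vertex of $N_i$ and one vertex of $N_j$ and still sit inside $N_i\cup N_j$ while meeting both $N_i$ and $N_j$ (take, say, $N_k=(N_i\setminus\{a_i\})\cup(N_j\setminus\{a_j\})$). So nothing in your argument actually forces $N_k\subset N_i$ or $N_k\subset N_j$. The fix is to apply Proposition~\ref{minimal} to the index $k$ itself, not to $i$ or $j$: since $k\ne i,j$, both $N_i$ and $N_j$ appear in $\bigcup_{\ell\ne k}N_\ell$, so $N_i\cup N_j\subset\bigcup_{\ell\ne k}N_\ell$; hence $N_k\subset N_i\cup N_j$ would immediately give $N_k\subset\bigcup_{\ell\ne k}N_\ell$, contradicting minimality of the Taylor resolution. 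This is a one-line observation, and it is what the paper does. (Note also that disjointness of $N_i$ and $N_j$ is not actually used in this exclusion step; it is used only afterward, to identify $K_{N_i\cup N_j}$ as a join and to feed Lemma~\ref{join}.) The rest of your write-up --- identifying the join, checking $|N_i|,|N_j|\ge 1$, and applying Lemma~\ref{join} --- is fine.
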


\begin{proof}
Let $N_1,\ldots,N_r$ be minimal non-faces of $K$. Assume $N_i\cap N_j=\emptyset$ for some $i\ne j$. By Proposition \ref{minimal}, we have $N_k\not\subset N_i\cup N_j$ for any $k\ne i,j$. Then by Lemma \ref{min-non-face}, $N_i,N_j$ are the only minimal non-faces of $K_{N_i\cup N_j}$. It follows that $K_{N_i\cup N_j}=\partial\Delta^{N_i}*\partial\Delta^{N_j}$. Then we have $|N_i|\ge 1$ and $|N_j|\ge 1$. Thus by Lemma \ref{join}, $K$ is not Golod, completing the proof. 
\end{proof}


\section{Polyhedral products}

In this section, we recall the definition of polyhedral products and their properties that we are going to use. Let $(\underline{X},\underline{A})=\{(X_i,A_i)\}_{i\in[m]}$ be a sequence of pairs of spaces indexed by vertices of $K$. The polyhedral product $\Z_K(\underline{X},\underline{A})$ is defined by
$$\Z_K(\underline{X},\underline{A}):=\bigcup_{\sigma\in K}(\underline{X},\underline{A})^\sigma\quad(\subset X_1\times\cdots\times X_m)$$
where 
$$(\underline{X},\underline{A})^\sigma=Y_1\times\cdots\times Y_m\quad\text{for}\quad Y_i=\begin{cases}X_i&i\in\sigma\\A_i&i\not\in\sigma.\end{cases}$$ 
For a sequence of pointed spaces $\underline{X}=\{X_i\}_{i\in[m]}$, we put $(C\underline{X},\underline{X}):=\{(CX_i,X_i)\}_{i\in[m]}$, where $CY$ denotes the reduced cone of a pointed space $Y$. The real moment-angle complex $\RZ_K$ is the polyhedral product $\Z_K(C\underline{X},\underline{X})$ with $X_i=S^0$ for all $i$ while the moment-angle complex $\Z_K$ is $\Z_K(C\underline{X},\underline{X})$ with $X_i=S^1$ for any $i$. Recall from \cite{IK1} that the fat wedge filtration
$$*=\Z_K^0(C\underline{X},\underline{X})\subset\Z_K^1(C\underline{X},\underline{X})\subset\cdots\subset\Z_K^m(C\underline{X},\underline{X})=\Z_K(C\underline{X},\underline{X})$$
is defined by
$$\Z_K^i(C\underline{X},\underline{X})=\{(x_1,\ldots,x_m)\in\Z_K(C\underline{X},\underline{X})\,\vert\,\text{at least }m-i\text{ of }x_i\text{ are basepoints}\}.$$
In \cite{IK1}, the fat wedge filtration is shown to be quite useful in studying the homotopy type of a polyhedral product $\Z_K(C\underline{X},\underline{X})$. For example, it is shown that the fat wedge filtration splits after a suspension so that we can recover the homotopy decomposition of Bahri, Bendersky, Cohen and Gitler \cite{BBCG} as follows. Let $|L|$ denote the geometric realization of a simplicial complex $L$, and put $\widehat{X}^I:=\bigwedge_{i\in I}X_i$ for a sequence of pointed spaces $\underline{X}=\{X_i\}_{i\in[m]}$.

\begin{theorem}
[Iriye and Kishimoto \cite{IK1} (cf. Bahri, Bendersky, Cohen and Gitler \cite{BBCG})]
There is a homotopy decomposition
$$\Sigma\Z_K(C\underline{X},\underline{X})\simeq\Sigma\bigvee_{i=1}^m\Z_K^i(C\underline{X},\underline{X})/\Z_K^{i-1}(C\underline{X},\underline{X})=\Sigma\bigvee_{\emptyset\ne I\subset[m]}|\Sigma K_I|\wedge\widehat{X}^I.$$
\end{theorem}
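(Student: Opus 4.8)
This statement recollects the main result of \cite{IK1}, so I only sketch a proof. The plan is to establish two facts and then induct up the fat wedge filtration. The first fact is a homotopy equivalence $\Z_K^i(C\underline{X},\underline{X})/\Z_K^{i-1}(C\underline{X},\underline{X})\simeq\bigvee_{|I|=i}|\Sigma K_I|\wedge\widehat{X}^I$, the wedge running over all $I\subset[m]$ with $|I|=i$. The second is that the inclusion $\Z_K^{i-1}(C\underline{X},\underline{X})\hookrightarrow\Z_K^i(C\underline{X},\underline{X})$ splits after one suspension, i.e. $\Sigma\Z_K^i(C\underline{X},\underline{X})\simeq\Sigma\Z_K^{i-1}(C\underline{X},\underline{X})\vee\Sigma(\Z_K^i(C\underline{X},\underline{X})/\Z_K^{i-1}(C\underline{X},\underline{X}))$. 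Granting both, an induction on $i$ with base case $\Z_K^0(C\underline{X},\underline{X})=\ast$ gives $\Sigma\Z_K(C\underline{X},\underline{X})\simeq\bigvee_{i=1}^m\Sigma(\Z_K^i(C\underline{X},\underline{X})/\Z_K^{i-1}(C\underline{X},\underline{X}))$, and the first fact rewrites the wedge as $\Sigma\bigvee_{\emptyset\ne I\subset[m]}|\Sigma K_I|\wedge\widehat{X}^I$ after regrouping the index $I$ by its size $i=|I|$; this is both the displayed equality and the displayed equivalence.

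I would prove the first fact by inspecting the strata. A point of $\Z_K^i(C\underline{X},\underline{X})$ not lying in $\Z_K^{i-1}(C\underline{X},\underline{X})$ has exactly $i$ non-basepoint coordinates, say with index set $I$ of size $i$; collapsing $\Z_K^{i-1}(C\underline{X},\underline{X})$ then splits $\Z_K^i/\Z_K^{i-1}$ as the wedge, over such $I$, of the ``full smash'' polyhedral product $\widehat{\Z}_{K_I}(C\underline{X}_I,\underline{X}_I):=\Z_{K_I}(C\underline{X}_I,\underline{X}_I)/\Z_{K_I}^{|I|-1}(C\underline{X}_I,\underline{X}_I)$, where $\underline{X}_I=\{X_i\}_{i\in I}$. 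It then remains to identify $\widehat{\Z}_L(C\underline{X},\underline{X})\simeq|\Sigma L|\wedge\widehat{X}^{[n]}$ for a simplicial complex $L$ on the vertex set $[n]$, and I would argue by induction on $n$, observing that adjoining one extra cone coordinate $CX_n$ replaces $\widehat{\Z}_L$ by a cone assembled from the links of the vertices of $L$ smashed with $X_n$, which is exactly the effect of the operation $L\mapsto\Sigma L$ on geometric realizations; the cases $L=\Delta^{[n]}$ (both sides contractible) and $L=\partial\Delta^{[n]}$ (both sides $\Sigma^{n-1}\widehat{X}^{[n]}$) serve as anchors.

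The hard part is the second fact, and it is where the machinery of \cite{IK1} does the work. Up to homotopy I would realize $\Z_K^i(C\underline{X},\underline{X})$ as the mapping cone of a map $g\colon A\to\Z_K^{i-1}(C\underline{X},\underline{X})$ with $\Sigma A\simeq\Z_K^i/\Z_K^{i-1}$, where $g$ is assembled, over the subsets $I$ of size $i$ (and the faces of the simplices $\partial\Delta^I$), from the attaching maps of the top strata of the pieces $\Z_{K_I}(C\underline{X}_I,\underline{X}_I)$. These attaching maps are generalized (higher) Whitehead products, hence become null-homotopic after one suspension since the ambient products split stably; therefore $\Sigma g\simeq\ast$, and a mapping cone with stably trivial attaching map splits after suspension, giving the second fact. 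The genuine obstacle is organizing this uniformly: describing the full attaching data of the filtration of $\Z_K(C\underline{X},\underline{X})$ over $\Z_K^{i-1}(C\underline{X},\underline{X})$ and proving its stable triviality in a single stroke, which is precisely where the combinatorics of $K$ enters through the restrictions $K_I$ and the simplices $\partial\Delta^I$; the first fact, by contrast, is routine bookkeeping once the smash-polyhedral-product formalism is in place.
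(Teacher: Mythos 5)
Your overall plan---identify the filtration quotients, split the filtration after one suspension, and induct---does match the strategy of [IK1], and your first fact (the identification $\Z_K^i/\Z_K^{i-1}\simeq\bigvee_{|I|=i}|\Sigma K_I|\wedge\widehat{X}^I$) is essentially the right bookkeeping, with correct anchor cases $L=\Delta^{[n]}$ and $L=\partial\Delta^{[n]}$. The problem is the second fact, which is where the real content lies, and there your argument has a genuine gap.

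First, you assert that $\Z_K^i(C\underline{X},\underline{X})$ can be realized, up to homotopy, as a mapping cone of some $g\colon A\to\Z_K^{i-1}(C\underline{X},\underline{X})$ with $\Sigma A\simeq \Z_K^i/\Z_K^{i-1}$. For a general cofibration this is simply false (not every quotient is a suspension, let alone the correct desuspension), and for the fat wedge filtration it is precisely the hard technical result of [IK1]---established there only for $\RZ_K$, i.e. the case $X_i=S^0$ (this is Theorem \ref{IK-cone} in the present paper). For arbitrary $\underline{X}$ no such cone decomposition of the filtration is claimed or known, so the object $g$ on which your whole argument rests need not exist. Second, and more seriously, even granting the cone decomposition for $\RZ_K$, the claim that the attaching maps are ``generalized (higher) Whitehead products, hence become null-homotopic after one suspension'' is not established and cannot be, at this level of generality. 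Understanding the maps $\varphi_{K_I}$---in particular, when they are null homotopic---is exactly the open combinatorial problem that the present paper is attacking, and is tightly bound to Golodness: if they were always recognizably Whitehead products the main question would be settled. (They are Whitehead products for $K=\partial\Delta^{[m]}$, which is probably the source of the intuition, but that case is very special.) As it stands, the claim that $\Sigma g\simeq *$ because $g$ is a Whitehead product amounts to assuming the splitting rather than proving it.

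The route actually taken in [IK1] (following and refining [BBCG]) avoids both of these issues by working with the natural retractions rather than with attaching maps. For each $\emptyset\ne I\subset[m]$ the coordinate projection gives a retraction $\Z_K(C\underline{X},\underline{X})\to\Z_{K_I}(C\underline{X}_I,\underline{X}_I)$ compatible with the fat wedge filtration, and composing with the top quotient $\Z_{K_I}\to\widehat{\Z}_{K_I}\simeq|\Sigma K_I|\wedge\widehat{X}^I$ produces, after one suspension and summation over $I$ using the co-$H$ structure of $\Sigma\Z_K$, a map $\Sigma\Z_K(C\underline{X},\underline{X})\to\bigvee_{\emptyset\ne I\subset[m]}|\Sigma K_I|\wedge\widehat{X}^I$. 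Its compatibility with the filtration plus an inductive five-lemma argument (or, equivalently, the stable retractile property of products, $\Sigma(X\times Y)\simeq\Sigma X\vee\Sigma Y\vee\Sigma(X\wedge Y)$, fed through the homotopy-colimit presentation of $\Z_K$) shows it is a homotopy equivalence. No appeal to Whitehead products, and no cone-decomposition assumption on the filtration of the general polyhedral product, is needed.
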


We call this homotopy decomposition the BBCG decomposition. Let us consider a desuspension of the BBCG decomposition. As for the moment-angle complexes, desuspension is completely characterized as:

\begin{theorem}
[Iriye and Kishimoto \cite{IK1}]
\label{Z-BBCG}
The moment-angle complex $\Z_K$ is a suspension if and only if its BBCG decomposition desuspends.
\end{theorem}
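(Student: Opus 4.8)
The plan is to prove the two implications separately; the ``if'' direction is formal, while for ``only if'' I would establish the a priori stronger assertion that whenever $\Z_K$ is a co-H-space all the connecting maps of its fat wedge filtration are null homotopic --- so that its BBCG decomposition desuspends and, in particular, it is a suspension. Together with ``if'' this makes ``co-H-space'', ``suspension'' and ``BBCG decomposition desuspends'' all equivalent for a moment-angle complex. The ``if'' direction is immediate: if $\Z_K\simeq\bigvee_{\emptyset\ne I\subset[m]}|\Sigma K_I|\wedge\widehat{X}^I$ with $X_i=S^1$, then $\widehat{X}^I=S^{|I|}$ and each summand equals $\Sigma(|\Sigma K_I|\wedge S^{|I|-1})$ since $|I|\ge1$, so $\Z_K$ is a wedge of suspensions and hence a suspension.

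For the substance, write $\Z_K^i:=\Z_K^i(C\underline{X},\underline{X})$ with all $X_i=S^1$, and let $\delta_i\colon\Z_K^i/\Z_K^{i-1}\to\Sigma\Z_K^{i-1}$ be the connecting map of the cofibration $\Z_K^{i-1}\to\Z_K^i\to\Z_K^i/\Z_K^{i-1}$. Two observations reduce the problem. First, if $\delta_i$ is null homotopic then the quotient $\Z_K^i\to\Z_K^i/\Z_K^{i-1}$ admits a section, and since the spaces in sight are simply connected (discard ghost vertices at the outset; the ghost-vertex case is a torus times a lower moment-angle complex and is handled directly) the homology exact sequence of the pair splits, so a homology comparison yields $\Z_K^i\simeq\Z_K^{i-1}\vee\Z_K^i/\Z_K^{i-1}$; iterating, it suffices to prove every $\delta_i$ is null homotopic. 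Second, $\delta_i$ splits over the wedge summands $|\Sigma K_I|\wedge\widehat{X}^I$, $|I|=i$, of $\Z_K^i/\Z_K^{i-1}$, and by the naturality of the fat wedge filtration under passage to induced subcomplexes its restriction to such a summand is the top connecting map $\Z_{K_I}/\Z_{K_I}^{i-1}\to\Sigma\Z_{K_I}^{i-1}$ composed with the suspension of $\Z_{K_I}^{i-1}\hookrightarrow\Z_K^{i-1}$. Hence everything reduces to showing that for each induced subcomplex $L=K_I$ on $n:=|I|$ vertices its top connecting map $\Z_L/\Z_L^{n-1}\to\Sigma\Z_L^{n-1}$ is null homotopic.

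I would prove this by induction on the number of vertices. When $|I|<m$, the induced subcomplex $K_I$ makes $\Z_{K_I}$ a retract of $\Z_K$ via the coordinate projection (a genuine retraction for an induced subcomplex), hence a co-H-space, so the inductive hypothesis applies to $\Z_{K_I}$ and in particular its top connecting map vanishes; this disposes of all proper $I$. It remains to treat $L=K$ itself, that is, to show $\delta_m\colon\Z_K/\Z_K^{m-1}=|\Sigma K|\wedge S^m\to\Sigma\Z_K^{m-1}$ is null homotopic, knowing only that $\Z_K$ is a co-H-space and with no proper subcomplex to invoke.

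I expect this top cell to be the main obstacle. By the previous step one already has $\Z_K^{m-1}\simeq\bigvee_{\emptyset\ne I\subsetneq[m]}|\Sigma K_I|\wedge S^{|I|}$, one knows $\Z_K$ is $2$-connected, and one knows $\widetilde H_*(\Z_K)\cong\widetilde H_*(\Z_K^{m-1})\oplus\widetilde H_*(|\Sigma K|\wedge S^m)$ since the suspended splitting is already available; so, as in the reduction above, it would suffice to produce a homotopy section of the quotient $q\colon\Z_K\to|\Sigma K|\wedge S^m$, for then the section together with the inclusion of $\Z_K^{m-1}$ gives a homology isomorphism of simply connected spaces, hence an equivalence. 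Constructing this section --- equivalently, showing $\delta_m\simeq *$ --- is the delicate core: a co-H-comultiplication on $\Z_K$ bears no formal relation to the geometrically defined fat wedge filtration, so one must feed in concrete features of the moment-angle complex (its manifold-like properties and duality, and the fact that $\Z_K^{m-1}$ already carries all homology below the top dimension) to force the quotient to split off. Once $\delta_m$ is null homotopic the fat wedge filtration splits completely, the BBCG decomposition desuspends, and the induction --- hence the theorem --- is finished.
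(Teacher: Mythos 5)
This theorem is imported from \cite{IK1} and is not proved in the present paper, so there is no internal argument to compare against; your proposal must stand on its own. The ``if'' direction is fine, and the plan to induct on the number of vertices using the coordinate retractions $\Z_K\to\Z_{K_I}$ for proper $I\subsetneq[m]$ is sensible. But the proposal has a genuine gap exactly where you flag one: you never prove that the top attaching map of the fat wedge filtration vanishes when $\Z_K$ is a co-H-space. An abstract co-H-comultiplication on $\Z_K$ has no a priori relation to the geometrically defined maps $\varphi_{K_I}$ of Theorem~\ref{IK-cone}, and ``manifold-like properties and duality'' is a gesture, not an argument. Without this step the entire ``only if'' direction is unproved. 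Note also that you are forced to strengthen the inductive hypothesis from ``suspension'' to ``co-H-space'' (a retract of a suspension is in general only a co-H-space), so you are implicitly claiming a statement stronger than the one asserted; that strengthening also needs justification.

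There is a second, logical, defect in the formal reduction. You set out to show each \emph{connecting} map $\delta_i\colon\Z_K^i/\Z_K^{i-1}\to\Sigma\Z_K^{i-1}$ is null and infer from this a section of the quotient and hence a splitting $\Z_K^i\simeq\Z_K^{i-1}\vee\Z_K^i/\Z_K^{i-1}$. That inference fails in general: the connecting map is (up to sign) the suspension of the attaching map, and a map can be essential with null suspension. For a concrete failure, $2\eta\colon S^3\to S^2$ is essential but $\Sigma(2\eta)=0$ in $\pi_4(S^3)\cong\ZZ/2$, so for $X=S^2\cup_{2\eta}e^4$ the connecting map $S^4\to S^3$ is null while $X\to S^4$ has no section. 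What yields the unsuspended splitting is nullity of the actual \emph{attaching} maps $\varphi_{K_I}\colon|K_I|\to\RZ_{K}^{|I|-1}$ (and their analogues for $\Z_K$), which is strictly stronger than nullity of $\delta_i$ and is the condition \cite{IK1} works with. Repointing the argument at the attaching maps would fix the reduction, but the central claim --- that the top attaching map is null whenever $\Z_K$ is a co-H-space --- remains unestablished in your write-up.
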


Then as we will see in Corollary \ref{suspension-Golod} below that a desuspension of the BBCG decomposition of $\Z_K(C\underline{X},\underline{X})$ is closely related with Golodness of $\Bbbk[K]$. So we recall from \cite{IK1} a criterion for desuspending the BBCG decomposition. It is shown in \cite{IK1} that to investigate the fat wedge filtration of $\Z_K(C\underline{X},\underline{X})$, the fat wedge filtration of the real moment-angle complex $\RZ_K$ plays an important role. The fat wedge filtration of $\RZ_K$ has the following property.

\begin{theorem}
[Iriye and Kishimoto \cite{IK1}]
\label{IK-cone}
There is a map $\varphi_{K_I}\colon|K_I|\to\RZ_K^{i-1}$ for each $I\subset[m]$ with $|I|=i$ such that $\RZ_K^i$ is obtained from $\RZ_K^{i-1}$ by attaching cones by maps $\varphi_{K_I}$ for all $I\subset[m]$ with $|I|=i$.
\end{theorem}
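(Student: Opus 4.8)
The plan is to exhibit, for each $i$, the space $\RZ_K^i$ as a pushout obtained from $\RZ_K^{i-1}$ by attaching one (unreduced) cone $C|K_I|$ for every $I\subset[m]$ with $|I|=i$. It is convenient to identify $\RZ_K$ with the subspace of $[0,1]^m$ consisting of the points $x=(x_1,\dots,x_m)$ with $\{j\in[m] : x_j\in(0,1)\}\in K$, the basepoint of $CS^0=[0,1]$ being $0$; then $\RZ_K^i=\{x\in\RZ_K : |\{j : x_j\ne 0\}|\le i\}$. For $I\subset[m]$ with $|I|=i$ let $\RZ_{K_I}\subset\RZ_K$ denote the subspace of points supported on $I$ (equivalently, the real moment-angle complex of $K_I$ embedded by setting the coordinates off $I$ to $0$); it lies in $\RZ_K^i$, and $\RZ_{K_I}\cap\RZ_K^{i-1}$ equals $\RZ_{K_I}^{i-1}$, the set of points of $\RZ_{K_I}$ having some coordinate in $I$ equal to $0$.

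First I would define the attaching map $\varphi_{K_I}\colon|K_I|\to\RZ_K^{i-1}$. Write a point of $|K_I|$ as $x=\sum_{j\in I}t_j e_j$ with $t_j\ge 0$, $\sum_j t_j=1$ and carrier $\{j : t_j>0\}\in K_I$, and let $\varphi_{K_I}(x)$ be the point of $[0,1]^m$ with coordinates vanishing outside $I$ and $j$-th coordinate $1-t_j/\max_{k\in I}t_k$ for $j\in I$. Since $\max_k t_k\ge 1/i>0$ this is continuous; the set of indices where the coordinate lies in $(0,1)$ is contained in the carrier of $x$, hence lies in $K$, while any index attaining the maximum gives coordinate $0$, so $\varphi_{K_I}$ does take values in $\RZ_{K_I}^{i-1}\subset\RZ_K^{i-1}$. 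I would then extend it to $\Phi_{K_I}\colon C|K_I|\to\RZ_{K_I}$, $C|K_I|=|K_I|\times[0,1]/|K_I|\times\{1\}$, by letting $\Phi_{K_I}(x,s)$ have $j$-th coordinate $1-(1-s)\,t_j/\max_{k\in I}t_k$ for $j\in I$ (and $0$ otherwise); this is well defined because $s=1$ yields the vertex $(1,\dots,1)$ regardless of $x$, and it restricts to $\varphi_{K_I}$ on $|K_I|\times\{0\}$.

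The key step is to show that $\Phi_{K_I}$ carries $C|K_I|\setminus(|K_I|\times\{0\})$ homeomorphically onto $\RZ_{K_I}\setminus\RZ_{K_I}^{i-1}$, which I would do by inverting the formula: given $y\in\RZ_{K_I}$ with every coordinate in $I$ positive, set $s=\min_{j\in I}y_j$ and, when $y\ne(1,\dots,1)$, $t_j=(1-y_j)/\sum_{k\in I}(1-y_k)$, noting $\{j : t_j>0\}=\{j : y_j\in(0,1)\}\in K$ so that $(x,s)$ with $x=\sum t_j e_j$ indeed lies in $C|K_I|$ with $s>0$. One checks this is a continuous two-sided inverse, and, as a byproduct, that $\varphi_{K_I}(x)$ for $x\in|K_I|$ is exactly the $s\to 0^+$ limit of $\Phi_{K_I}(x,s)$, so the gluing is genuine and not merely set-theoretic. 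Since $C|K_I|$ is compact Hausdorff, this yields a homeomorphism $\RZ_{K_I}\cong\RZ_{K_I}^{i-1}\cup_{\varphi_{K_I}}C|K_I|$.

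Finally I would assemble: every $x\in\RZ_K^i$ with exactly $i$ nonzero coordinates has support a set $I$ of size $i$ and lies in $\RZ_{K_I}$, the remaining points lie in $\RZ_K^{i-1}$, and $\RZ_{K_I}\cap\RZ_{K_J}\subset\RZ_K^{i-1}$ for $I\ne J$ because a common point is supported on $I\cap J$. Hence the tautological map from
$$\RZ_K^{i-1}\ \cup_{\textstyle\bigsqcup_{|I|=i}\varphi_{K_I}}\ \bigsqcup_{|I|=i}C|K_I|$$
to $\RZ_K^i$ is a continuous bijection between compact Hausdorff spaces, hence a homeomorphism, which is exactly the assertion. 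I expect the main obstacle to be the verification in the third paragraph --- producing the explicit inverse and tracking that the condition ``carrier lies in $K$'' is preserved in both directions --- since this is what guarantees that $\Phi_{K_I}$ is a homeomorphism onto $\RZ_{K_I}$ and that the displayed pushout genuinely computes $\RZ_K^i$ rather than some coarser quotient; the combinatorics of how the various $\RZ_{K_I}$ overlap is then routine.
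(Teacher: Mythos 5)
The paper states this theorem as a citation from \cite{IK1} and gives no proof of its own, so there is no internal argument to compare against. Your reconstruction is correct: the coordinate model $\RZ_K\subset[0,1]^m$ with $CS^0=[0,1]$ and basepoint $0$ is the standard one, the formula $\Phi_{K_I}(x,s)_j=1-(1-s)t_j/\max_{k\in I}t_k$ is well defined on the cone (constant at $s=1$), continuous (since $\max_k t_k\ge 1/i$), lands in $\RZ_{K_I}$ (the set where coordinates lie in $(0,1)$ is contained in the carrier of $x$), and your explicit inverse $s=\min_j y_j$, $t_j=(1-y_j)/\sum_k(1-y_k)$ does verify a homeomorphism of the open cone onto $\RZ_{K_I}\setminus\RZ_{K_I}^{i-1}$; together with the observation that $\RZ_K^i\setminus\RZ_K^{i-1}$ is the disjoint union over $I$ of these pieces, the compact-Hausdorff bijection argument finishes the proof. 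This is essentially the same explicit radial/projective cone structure used in \cite{IK1} to construct $\varphi_{K_I}$; in particular, your definition is purely in terms of the coordinates indexed by $I$ and does not depend on $K$, which yields precisely the naturality of $\varphi_{K_I}$ under inclusions of simplicial complexes that the present paper invokes later.
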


We say that the fat wedge filtration of $\RZ_K$ is trivial if $\varphi_{K_I}$ is null homotopic for any $\emptyset\ne I\subset[m]$. Then if the fat wedge filtration of $\RZ_K$ is trivial, the BBCG decomposition for $\RZ_K$ desuspends. Moreover, we have:

\begin{theorem}
[Iriye and Kishimoto \cite{IK1}]
\label{IK-decomp}
If the fat wedge filtration of $\RZ_K$ is trivial, then the BBCG decomposition of $\Z_K(C\underline{X},\underline{X})$ desuspends for any $\underline{X}$.
\end{theorem}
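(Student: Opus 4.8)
The plan is to desuspend the BBCG decomposition one step of the fat wedge filtration at a time, using a version of Theorem \ref{IK-cone} valid for an arbitrary $\underline{X}$. Since the subquotients of the fat wedge filtration of $\Z_K(C\underline{X},\underline{X})$ are $\Z_K^i(C\underline{X},\underline{X})/\Z_K^{i-1}(C\underline{X},\underline{X})\simeq\bigvee_{|I|=i}|\Sigma K_I|\wedge\widehat{X}^I$, it suffices to prove, for each $i$, a homotopy equivalence
$$\Z_K^i(C\underline{X},\underline{X})\simeq\Z_K^{i-1}(C\underline{X},\underline{X})\vee\bigvee_{|I|=i}|\Sigma K_I|\wedge\widehat{X}^I.$$
Indeed, an induction on $i$ starting from $\Z_K^0(C\underline{X},\underline{X})=*$ then gives $\Z_K(C\underline{X},\underline{X})\simeq\bigvee_{\emptyset\ne I\subset[m]}|\Sigma K_I|\wedge\widehat{X}^I$, which is precisely a desuspension of the BBCG decomposition.

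To get the splitting at stage $i$, I would first upgrade Theorem \ref{IK-cone}: for each $I\subset[m]$ with $|I|=i$ there is a space $\W_I$, functorial in $\underline{X}$ (a suitable half-smash of $|K_I|$ and $\widehat{X}^I$, so that $C\W_I/\W_I\simeq\Sigma\W_I\simeq|\Sigma K_I|\wedge\widehat{X}^I$ and so that the construction recovers the data of Theorem \ref{IK-cone} when $\underline{X}=(S^0,\dots,S^0)$), together with a map $\varphi_{K_I}^{\underline{X}}\colon\W_I\to\Z_K^{i-1}(C\underline{X},\underline{X})$ such that $\Z_K^i(C\underline{X},\underline{X})$ is obtained from $\Z_K^{i-1}(C\underline{X},\underline{X})$ by attaching the cones on $\W_I$ along $\varphi_{K_I}^{\underline{X}}$ for all such $I$. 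This is read off by unwinding the definitions of the polyhedral product and the fat wedge filtration near the $i$-th jump, handling the coordinates through the standard presentation of $CX_j$ as a quotient of $(CS^0)\times X_j$. The essential extra ingredient is naturality: $\varphi_{K_I}^{\underline{X}}$ factors as the composite of $\varphi_{K_I}\ltimes\mathrm{id}_{\widehat{X}^I}$ with a structure map $\RZ_K^{i-1}\ltimes\widehat{X}^I\to\Z_K^{i-1}(C\underline{X},\underline{X})$ induced by that same presentation, $\varphi_{K_I}$ being the map of Theorem \ref{IK-cone}.

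Granting this, the conclusion is formal. If the fat wedge filtration of $\RZ_K$ is trivial, then $\varphi_{K_I}$ is null homotopic for every $\emptyset\ne I\subset[m]$; hence $\varphi_{K_I}\ltimes\mathrm{id}_{\widehat{X}^I}$, and therefore $\varphi_{K_I}^{\underline{X}}$ as a map into $\Z_K^{i-1}(C\underline{X},\underline{X})$, is null homotopic. Attaching a cone along a null homotopic map splits off a suspension: $Y\cup_\psi C\W\simeq Y\vee(C\W/\W)$ whenever $\psi\colon\W\to Y$ is null homotopic. Applying this to all of the cones attached at stage $i$ and using $C\W_I/\W_I\simeq|\Sigma K_I|\wedge\widehat{X}^I$ produces the splitting of $\Z_K^i(C\underline{X},\underline{X})$ displayed above, completing the induction.

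The main obstacle is the generalised cone attachment in the second paragraph: both identifying the pair $(\Z_K^i(C\underline{X},\underline{X}),\Z_K^{i-1}(C\underline{X},\underline{X}))$ as a cone attachment, and — more delicately — checking that the attaching map is a half-smash of $\varphi_{K_I}$ on the nose rather than something merely homotopic to it, so that triviality of the fat wedge filtration of $\RZ_K$ passes to $\varphi_{K_I}^{\underline{X}}$ with no further argument. One must track basepoints carefully throughout — reduced versus unreduced cones and suspensions, and the distinction between $|\Sigma K_I|$ and $\Sigma|K_I|$ — and dispose of the degenerate cases of ghost vertices and of empty $K_I$ separately.
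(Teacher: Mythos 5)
This theorem is cited from \cite{IK1} and not proved in the present paper, but your outline matches the argument given there: Iriye and Kishimoto show that the cone attachments of Theorem \ref{IK-cone} generalize to arbitrary $\underline{X}$ with attaching maps that factor through a half-smash $\varphi_{K_I}\ltimes\mathrm{id}_{\widehat{X}^I}$ composed with a structure map $\RZ_K^{i-1}\ltimes\widehat{X}^I\to\Z_K^{i-1}(C\underline{X},\underline{X})$, and then nullity of $\varphi_{K_I}$ splits each stage of the filtration, giving the desuspended BBCG decomposition by induction exactly as you describe. The step you correctly flag as ``the main obstacle'' --- establishing the half-smash factorization on the nose rather than up to homotopy --- is indeed the technical heart of the proof in \cite{IK1}.
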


We pass to the connection between Golodness and moment-angle complexes. In \cite{BBP}, Baskakov, Buchstaber and Panov observed that the cellular cochain complex with coefficient $\Bbbk$ of the natural cell structure of the moment-angle complex $\Z_K$ is isomorphic to the Koszul resolution of $\Bbbk$ over $\Bbbk[K]$ tensored with $\Bbbk[K]$. As a result, we have:

\begin{theorem}
[Baskakov, Buchstaber and Panov \cite{BBP}]
\label{BBP}
There is an isomorphism
$$H^*(\Z_K;\Bbbk)\cong\mathrm{Tor}^*_{\Bbbk[v_1,\ldots,v_m]}(\Bbbk[K],\Bbbk)$$
which respects all products and (higher) Massey products.
\end{theorem}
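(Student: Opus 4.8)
The plan is to exhibit both sides as cohomology algebras of explicit differential graded $\Bbbk$-algebras and then to connect those DGAs by a zig-zag of quasi-isomorphisms. Since the cup product ring and, more importantly, all higher Massey products are invariants of the DGA quasi-isomorphism type, the multiplicative assertion --- which is the real content --- follows at once once the quasi-isomorphisms are in place.

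First I would identify the cellular cochain DGA of $\Z_K$. Give the pair $(D^2,S^1)$ the CW structure with one cell $e^0,e^1,e^2$ in each dimension $0,1,2$ and $S^1$ as $1$-skeleton, and give $\Z_K=\Z_K(D^2,S^1)$ the CW structure it inherits as a subcomplex of $(D^2)^m$. Its cells are indexed by pairs $(I,J)$ of disjoint subsets of $[m]$ with $I\in K$: the cell $\kappa(I,J)$ carries $e^2$ in the coordinates of $I$, $e^1$ in those of $J$ and the basepoint elsewhere, and has dimension $2|I|+|J|$. From $\partial e^1=0$ and $\partial e^2=\pm e^1$ one computes the cellular coboundary and identifies $C^*(\Z_K;\Bbbk)$, as a cochain complex, with
$$\mathcal{R}^*(K):=\Bbbk[v_1,\ldots,v_m]\otimes\Lambda[u_1,\ldots,u_m]\,\big/\,(v_N\mid N\text{ a minimal non-face of }K,\ v_i^2,\ u_iv_i),$$
where $|u_i|=1$, $|v_i|=2$, $du_i=v_i$, and the generator dual to $\kappa(I,J)$ corresponds to $v_Iu_J$. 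To upgrade this to an isomorphism of DGAs I would equip $(D^2,S^1)$ with the coassociative cellular diagonal $e^0\mapsto e^0\otimes e^0$, $e^1\mapsto e^1\otimes e^0+e^0\otimes e^1$, $e^2\mapsto e^2\otimes e^0+e^0\otimes e^2$ (no $e^1\otimes e^1$ term is needed, so it is strictly coassociative), take its $m$-fold product on $(D^2)^m$, and note that it restricts to a cellular diagonal of $\Z_K$ because that diagonal factors through $\Z_K(D^2\times D^2,S^1\times S^1)\subset\Z_K\times\Z_K$. A sign check then shows the induced cup product on $C^*(\Z_K;\Bbbk)$ is the evident product on $\mathcal{R}^*(K)$, the relations $u_i^2=v_i^2=u_iv_i=0$ being forced by the shape of the cells (one cell of $D^2$ per coordinate) and by this choice of diagonal. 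Since a strictly coassociative cellular diagonal yields a cellular cochain DGA that is DGA quasi-isomorphic to the singular cochain DGA of $\Z_K$, this identifies $H^*(\Z_K;\Bbbk)$ with $H^*(\mathcal{R}^*(K))$ compatibly with products and Massey products. This is essentially the observation of Baskakov, Buchstaber and Panov.

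Second, the purely algebraic comparison. The Koszul DGA $\Bbbk[K]\otimes\Lambda[u_1,\ldots,u_m]$ with $du_i=v_i$ equals $\Bbbk[K]\otimes_{\Bbbk[v_1,\ldots,v_m]}(\text{Koszul resolution of }\Bbbk)$, hence has cohomology $\mathrm{Tor}_{\Bbbk[v_1,\ldots,v_m]}(\Bbbk[K],\Bbbk)$ with its standard algebra structure. There is an evident surjection of DGAs
$$\pi\colon\Bbbk[K]\otimes\Lambda[u_1,\ldots,u_m]\longrightarrow\mathcal{R}^*(K)$$
killing $v_i^2$ and $u_iv_i$; its kernel is the subcomplex spanned by the monomials $v^{\mathbf b}u_S$ that are not squarefree (some $b_i\ge 2$, or some $b_i\ge 1$ with $i\in S$), and one checks directly that this is closed under the differential. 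Grading everything by $\ZZ^m$ with $\deg v_i=\deg u_i=\mathbf{1}_i$, the kernel lives in non-squarefree multidegrees while $\mathcal{R}^*(K)$ lives in squarefree ones, so $\pi$ is a quasi-isomorphism if and only if $\mathrm{Tor}_{\Bbbk[v_1,\ldots,v_m]}(\Bbbk[K],\Bbbk)$ vanishes in non-squarefree multidegrees --- the classical squarefree-concentration half of Hochster's formula, which I would prove by an explicit contracting homotopy on the kernel (or by induction on the number of variables). In squarefree multidegrees the kernel already vanishes, so there is nothing to check there.

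Composing the two steps, and using that a DGA quasi-isomorphism induces a ring isomorphism on cohomology and carries Massey products to Massey products, we obtain the isomorphism
$$H^*(\Z_K;\Bbbk)\ \cong\ \mathrm{Tor}_{\Bbbk[v_1,\ldots,v_m]}(\Bbbk[K],\Bbbk)$$
respecting all products and higher Massey products. I expect the two places needing genuine care to be the sign bookkeeping in the cellular DGA identification of the first step --- routine but fussy --- and the squarefree-concentration input of the second; neither is conceptually deep, but both require attention.
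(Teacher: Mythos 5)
The paper does not actually prove this theorem: it is attributed to Baskakov--Buchstaber--Panov, with only a one-sentence gloss that the cellular cochain complex of $\Z_K$ is isomorphic to the Koszul complex of $\Bbbk$ tensored with $\Bbbk[K]$. Your proposal is a correct and essentially complete reconstruction of the original BBP argument (the one also presented in Buchstaber--Panov's toric topology book): identify $C^*(\Z_K;\Bbbk)$ with the quotient DGA $\mathcal{R}^*(K)$, using a cellular diagonal on $(D^2,S^1)$ chosen without the $e^1\otimes e^1$ term (valid because $D^2$ is contractible, so any chain-level coproduct extending the standard one on $S^1$ may be realized), and then prove that $\pi\colon\Bbbk[K]\otimes\Lambda[u_1,\ldots,u_m]\to\mathcal{R}^*(K)$ is a quasi-isomorphism via squarefree concentration of $\mathrm{Tor}$. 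This is precisely the route the cited source takes, so your proof matches the intended argument. One point worth flagging against the paper's wording: the cellular cochain complex is literally \emph{isomorphic} to the quotient $\mathcal{R}^*(K)$, not to the full Koszul complex $\Bbbk[K]\otimes\Lambda[u_1,\ldots,u_m]$; the latter maps onto it by a quasi-isomorphism, and since all products and Massey products are invariant under DGA quasi-isomorphism, this yields the multiplicative statement. Your version is the more accurate formulation, and the two items you flag as needing care (sign bookkeeping for the cellular diagonal, and the contracting homotopy for the kernel of $\pi$ in non-squarefree multidegree) are indeed the only substantive checks left, both routine.
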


\begin{corollary}
\label{suspension-Golod}
If $\Z_K$ is a suspension, $\Bbbk[K]$ is Golod for any commutative ring $\Bbbk$.
\end{corollary}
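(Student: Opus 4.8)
The plan is to push everything into cohomology by means of Theorem~\ref{BBP}, which identifies $H^*(\Z_K;\Bbbk)$ with $\mathrm{Tor}^*_{\Bbbk[v_1,\ldots,v_m]}(\Bbbk[K],\Bbbk)$ compatibly with all products and (higher) Massey products. So it suffices to show that, when $\Z_K$ is a suspension, every product and every higher Massey product in the positive-degree part $\widetilde{H}^*(\Z_K;\Bbbk)$ is trivial.

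For the products I would argue as follows. If $\Z_K\simeq\Sigma A$, then $\Z_K$ is a co-$H$-space via the pinch map, so the diagonal $\Z_K\to\Z_K\times\Z_K$ lifts up to homotopy through the inclusion $\Z_K\vee\Z_K\hookrightarrow\Z_K\times\Z_K$; consequently the reduced diagonal $\bar\Delta\colon\Z_K\to\Z_K\wedge\Z_K$ is null homotopic. Since the cup product of two classes of $\widetilde{H}^*(\Z_K;\Bbbk)$ is obtained by pulling back the external product along $\bar\Delta$, all such cup products vanish.

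For the higher Massey products one needs more than the co-$H$-space property: a suspension in fact carries coherent null homotopies of all of its iterated reduced diagonals $\Z_K\to(\Z_K)^{\wedge k}$, and from this it follows that all (higher) Massey products in $\widetilde{H}^*(\Z_K;\Bbbk)$ are trivial. I expect this to be the one genuinely nontrivial input, since it does not follow formally from the vanishing of the cup products alone, but it is a classical fact about suspensions. Granting it, all products and higher Massey products in $\mathrm{Tor}^+_{\Bbbk[v_1,\ldots,v_m]}(\Bbbk[K],\Bbbk)$ vanish by Theorem~\ref{BBP}, and as this argument is valid over an arbitrary commutative ring, $\Bbbk[K]$ is Golod for every $\Bbbk$.
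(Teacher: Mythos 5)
Your proof is correct and is exactly the (implicit) argument behind the corollary in the paper, which states it without proof as an immediate consequence of Theorem~\ref{BBP}: the isomorphism transfers the triviality of products and (higher) Massey products in $\widetilde{H}^*(\Z_K;\Bbbk)$ — which holds for any suspension — to $\mathrm{Tor}^+_{\Bbbk[v_1,\ldots,v_m]}(\Bbbk[K],\Bbbk)$. You correctly single out the vanishing of higher Massey products of a suspension as the one input that does not follow formally from the co-$H$ property alone, and that is indeed the classical fact being invoked.
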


Then by Theorem \ref{Z-BBCG}, we obtain:

\begin{corollary}
If the fat wedge filtration of $\RZ_K$ is trivial, then $\Bbbk[K]$ is Golod over any commutative ring $\Bbbk$.
\end{corollary}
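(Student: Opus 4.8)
The plan is to chain together the three results just assembled in this section, reading them off in the order algebra $\leftarrow$ homotopy $\leftarrow$ combinatorics. First I would specialize Theorem~\ref{IK-decomp} to the sequence $\underline{X}=\{S^1\}_{i=1}^m$. Since by definition the moment-angle complex satisfies $\Z_K=\Z_K(C\underline{X},\underline{X})$ for exactly this choice of $\underline{X}$, triviality of the fat wedge filtration of $\RZ_K$ immediately gives that the BBCG decomposition of $\Z_K$ desuspends. Next I would feed this into Theorem~\ref{Z-BBCG}, which asserts that for a moment-angle complex the BBCG decomposition desuspends precisely when $\Z_K$ is itself a suspension; hence $\Z_K$ is a suspension.

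Finally I would apply Corollary~\ref{suspension-Golod} to conclude that $\Bbbk[K]$ is Golod over every commutative ring $\Bbbk$. The reason this last step is valid for \emph{all} $\Bbbk$ simultaneously is that, via the Baskakov--Buchstaber--Panov isomorphism of Theorem~\ref{BBP}, the products and higher Massey products in $\mathrm{Tor}^+_{\Bbbk[v_1,\ldots,v_m]}(\Bbbk[K],\Bbbk)$ are carried to the corresponding operations in $H^*(\Z_K;\Bbbk)$, and a space which is a suspension has vanishing cup products and vanishing Massey products in its reduced cohomology with arbitrary coefficients; the hypothesis that the fat wedge filtration of $\RZ_K$ is trivial is a statement about maps of spaces and does not mention $\Bbbk$ at all.

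There is no real obstacle to overcome: the corollary is a formal consequence of Theorems~\ref{IK-decomp}, \ref{Z-BBCG} and Corollary~\ref{suspension-Golod}, and all of its mathematical content is already contained in those inputs. The only point deserving a word of care is that Theorem~\ref{IK-decomp} is phrased for the whole family $\Z_K(C\underline{X},\underline{X})$ indexed by arbitrary pointed spaces $\underline{X}$, so one must simply observe that taking $X_i=S^1$ is a legitimate instance and that the resulting space is, by definition, $\Z_K$; once this identification is made the argument is immediate.
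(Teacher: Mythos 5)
Your proof is correct and follows exactly the same chain the paper uses: specialize Theorem~\ref{IK-decomp} to $X_i=S^1$ to get that the BBCG decomposition of $\Z_K$ desuspends, invoke Theorem~\ref{Z-BBCG} to conclude $\Z_K$ is a suspension, and finish with Corollary~\ref{suspension-Golod}. The paper cites only Theorem~\ref{Z-BBCG} in its lead-in, but its implicit argument is precisely the three-step chain you spell out.
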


We close this section by proving Lemma \ref{join}.

\begin{proof}
[Proof of Lemma \ref{join}]
By definition, we have $\Z_{\partial\Delta^W}=S^{2|W|-1}$ for a finite set $W$, and $\Z_{K*L}=\Z_K\times\Z_L$. Then we have $\Z_{\partial\Delta^I*\partial\Delta^J}=S^{2|I|-1}\times S^{2|J|-1}$. On the other hand, $\Z_{K_I}$ is a retract of $\Z_K$. So if $K_{I\cup J}=\partial\Delta^I*\partial\Delta^J$, the cohomology of $\Z_K$ in any coefficient has a non-trivial product, implying that $K$ is not Golod by Theorem \ref{BBP}. Thus the proof is completed.
\end{proof}


\section{Proof of Theorem \ref{main}}

We first investigate properties of simplicial complexes whose Stanley-Reisner rings have minimal Taylor resolutions. Then by Corollary \ref{K(N)-minimal} and Proposition \ref{K(N)}, we consider a simplicial complex $K(\mathtt{N})$ in Section 2. We recall notation for $K(\mathtt{N})$. $\mathtt{N}$ is a sequence $\{N_1,\ldots,N_r\}$ of subsets of a finite set $W$, and $\widetilde{N}_1,\ldots,\widetilde{N}_r$ are minimal non-faces of $K(\mathtt{N})$ such that $\widetilde{N}_i=N_i\sqcup\{a_i\}$ and $W\sqcup\{a_1,\ldots,a_r\}$ is the vertex set of $K(\mathtt{N})$. Put $m:=|W|+r$ which is the number of vertices of $K(\mathtt{N})$. For $w\in W$ we set 
$$\mathtt{N}_w:=\{N_i-w\,\vert\,i=1,\ldots,r\},\quad\widehat{\mathtt{N}}_w:=\{N_i\,\vert\,w\not\in N_i\},\quad A_w:=\{a_i\,\vert\,w\in N_i\}$$
where the ground sets of both $\mathtt{N}_w$ and $\widehat{\mathtt{N}}_w$ are $W-w$. Let $\mathrm{dl}_K(v)$ denote the deletion of a vertex $v$ in $K$. The following properties of the link and the deletion of $K(\mathtt{N})$ are immediate from Lemma \ref{min-non-face}.

\begin{lemma}
\label{dl-lk}
For $w\in W$ we have
$$\mathrm{lk}_{K(\mathtt{N})}(w)=K(\mathtt{N}_w),\quad\mathrm{dl}_{K(\mathtt{N})}(w)=K(\widehat{\mathtt{N}}_w)*\Delta^{A_w}.$$
\end{lemma}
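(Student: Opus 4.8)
The plan is to prove both identities by directly comparing faces: in each case the two complexes have the same vertex set, so it suffices to show they have the same simplices (equivalently, the same minimal non-faces, which is the sense in which the assertion is ``immediate from Lemma \ref{min-non-face}''). Write $V=W\sqcup\{a_1,\dots,a_r\}$ for the vertex set of $K(\mathtt{N})$, and recall that a subset $\sigma\subset V$ is a simplex of $K(\mathtt{N})$ exactly when $\widetilde{N}_j\not\subset\sigma$ for every $j$.

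For the link I would argue as follows. Every $\widetilde{N}_j=N_j\sqcup\{a_j\}$ contains the point $a_j\notin W$, so $\{w\}\in K(\mathtt{N})$ and the link is non-void, with vertex set contained in $V-w=(W-w)\sqcup\{a_1,\dots,a_r\}$. For $\tau\subset V-w$ we have $\tau\in\mathrm{lk}_{K(\mathtt{N})}(w)$ iff $\tau\cup\{w\}\in K(\mathtt{N})$ iff $\widetilde{N}_j\not\subset\tau\cup\{w\}$ for all $j$. The key point is that, whether or not $w\in N_j$, one has $\widetilde{N}_j-w=(N_j-w)\sqcup\{a_j\}$, and since $w$ already lies in $\tau\cup\{w\}$ this gives $\widetilde{N}_j\subset\tau\cup\{w\}$ iff $(N_j-w)\sqcup\{a_j\}\subset\tau$. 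Hence $\tau\in\mathrm{lk}_{K(\mathtt{N})}(w)$ iff $(N_j-w)\sqcup\{a_j\}\not\subset\tau$ for all $j$, which (identifying the auxiliary vertex attached to $N_j-w$ by the $K(-)$ construction with $a_j$) is precisely the condition for $\tau$ to be a simplex of $K(\mathtt{N}_w)$. Thus $\mathrm{lk}_{K(\mathtt{N})}(w)=K(\mathtt{N}_w)$. Alternatively one can run the minimal-non-face version: Lemma \ref{min-non-face}(1) says the minimal non-faces of the link lie among the sets $\widetilde{N}_i-w=(N_i-w)\sqcup\{a_i\}$, and Corollary \ref{K(N)-minimal} (i.e.\ Proposition \ref{minimal}) lets one verify each of these really is a minimal non-face of the link; this list is then exactly the list of minimal non-faces of $K(\mathtt{N}_w)$.

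For the deletion, $\mathrm{dl}_{K(\mathtt{N})}(w)=K(\mathtt{N})_{V-w}$, so Lemma \ref{min-non-face}(2) identifies its minimal non-faces as the $\widetilde{N}_i$ with $w\notin N_i$, i.e.\ those contained in $V-w$. On the other side, the full simplex $\Delta^{A_w}$ has no minimal non-face, and the minimal non-faces of a join $L*M$ of complexes on disjoint vertex sets are the union of those of $L$ and of $M$ (a subset of the joined vertex set fails to be a simplex exactly when its intersection with the vertex set of $L$, or with that of $M$, is a non-face of the corresponding factor); hence the minimal non-faces of $K(\widehat{\mathtt{N}}_w)*\Delta^{A_w}$ are precisely those of $K(\widehat{\mathtt{N}}_w)$, namely $N_i\sqcup\{a_i\}=\widetilde{N}_i$ for $w\notin N_i$. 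Since $(W-w)\sqcup\{a_i:w\notin N_i\}\sqcup A_w=V-w$, the two complexes share their vertex set and their minimal non-faces, so they are equal.

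I do not anticipate a real obstacle: the lemma is bookkeeping with minimal non-faces once Lemma \ref{min-non-face} is in hand. The one place that demands care is purely notational — keeping straight the vertex sets of $K(\mathtt{N}_w)$, $K(\widehat{\mathtt{N}}_w)$, the link and the deletion, correctly matching the ``new'' vertices produced by the $K(-)$ construction with the corresponding $a_i$, and checking that the argument still reads correctly in the degenerate cases where some $N_i$ or some $N_i-w$ is empty (so that $a_i$ becomes a ghost vertex).
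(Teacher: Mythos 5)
Your proof is correct and spells out exactly the bookkeeping that the paper leaves implicit by declaring the lemma ``immediate from Lemma~\ref{min-non-face}'': you compare faces, equivalently minimal non-faces, directly, after identifying the auxiliary vertices introduced by the $K(-)$ construction for $\mathtt{N}_w$ and $\widehat{\mathtt{N}}_w$ with the corresponding $a_i$. This is the intended argument.
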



We next describe the homotopy type of $|K(\mathtt{N})|$.

\begin{proposition}
\label{homotopy-type}
We have
$$|K(\mathtt{N})|\simeq\begin{cases}S^{|W|-1}&N_1\cup\cdots\cup N_r=W\\
*&\text{otherwise}\end{cases}$$
where we put $S^{-1}=\emptyset$. Moreover, for a sequence $\mathtt{M}=\{M_1,\ldots,M_r\}$ of subsets of $W$ satisfying $M_i\subset N_i$ for all $i$ and $M_1\cup\cdots\cup M_r=W$, the inclusion $|K(\mathtt{M})|\to|K(\mathtt{N})|$ is a homotopy equivalence.
\end{proposition}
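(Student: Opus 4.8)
The plan is to prove both assertions by induction on $|W|$, using the link--deletion formula of Lemma~\ref{dl-lk} together with the decomposition $K=\mathrm{dl}_K(v)\cup(\{v\}*\mathrm{lk}_K(v))$, whose geometric realization exhibits $|K|$ as a homotopy pushout $|\mathrm{dl}_K(v)|\leftarrow|\mathrm{lk}_K(v)|\to\mathrm{Cone}\,|\mathrm{lk}_K(v)|$ (the right-hand leg being a cofibration). For the homotopy type I first dispose of the case $N_1\cup\cdots\cup N_r\neq W$: if $w_0\in W\setminus(N_1\cup\cdots\cup N_r)$, then $w_0\notin\widetilde{N}_i$ for every $i$, so $\sigma\cup\{w_0\}$ contains no $\widetilde{N}_i$ whenever $\sigma$ does not, i.e.\ $w_0$ is a cone vertex of $K(\mathtt{N})$ and $|K(\mathtt{N})|$ is contractible (equivalently $A_{w_0}=\emptyset$, and Lemma~\ref{dl-lk} gives $\mathrm{dl}_{K(\mathtt{N})}(w_0)=\mathrm{lk}_{K(\mathtt{N})}(w_0)$). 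Next suppose $N_1\cup\cdots\cup N_r=W$. If $W=\emptyset$ all $N_i$ are empty, every $\{a_i\}$ is a minimal non-face, and $|K(\mathtt{N})|=\emptyset=S^{-1}$. If $W\neq\emptyset$, pick any $w\in W$; it lies in some $N_i$, so $A_w\neq\emptyset$, and Lemma~\ref{dl-lk} shows $\mathrm{dl}_{K(\mathtt{N})}(w)=K(\widehat{\mathtt{N}}_w)*\Delta^{A_w}$ is a cone, hence contractible. The pushout then gives $|K(\mathtt{N})|\simeq\Sigma|\mathrm{lk}_{K(\mathtt{N})}(w)|=\Sigma|K(\mathtt{N}_w)|$, and since $\bigcup_i(N_i-w)=W-w$ the inductive hypothesis yields $|K(\mathtt{N}_w)|\simeq S^{|W|-2}$, whence $|K(\mathtt{N})|\simeq S^{|W|-1}$.

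For the ``moreover'' statement, note first that $\widetilde{M}_i\subseteq\widetilde{N}_i$ makes $K(\mathtt{M})$ a subcomplex of $K(\mathtt{N})$ on the same vertex set, so the inclusion is defined; I again induct on $|W|$, the case $W=\emptyset$ being trivial. For $W\neq\emptyset$ pick $w\in W$. Since $\bigcup M_i=\bigcup N_i=W$, the vertex $w$ lies in some $M_i$ and in some $N_i$, so by Lemma~\ref{dl-lk} both $\mathrm{dl}_{K(\mathtt{M})}(w)$ and $\mathrm{dl}_{K(\mathtt{N})}(w)$ are cones, hence contractible, and the inclusion between them is a homotopy equivalence. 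The inclusion $\mathrm{lk}_{K(\mathtt{M})}(w)=K(\mathtt{M}_w)\subseteq K(\mathtt{N}_w)=\mathrm{lk}_{K(\mathtt{N})}(w)$ again satisfies the hypotheses of the Proposition, now over the ground set $W-w$ (we have $M_i-w\subseteq N_i-w$ and $\bigcup_i(M_i-w)=W-w$), so it is a homotopy equivalence by the inductive hypothesis; consequently the inclusion $\{w\}*\mathrm{lk}_{K(\mathtt{M})}(w)\subseteq\{w\}*\mathrm{lk}_{K(\mathtt{N})}(w)$, which realizes to the cone on that map, is a map between contractible spaces and hence also a homotopy equivalence. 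Since $K(\mathtt{M})\subseteq K(\mathtt{N})$ is compatible with the pushout decompositions at $w$, the gluing lemma for homotopy pushouts shows $|K(\mathtt{M})|\to|K(\mathtt{N})|$ is a homotopy equivalence.

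The link--deletion computations are entirely contained in Lemma~\ref{dl-lk}, so the only routine labor is degenerate-case bookkeeping: ghost vertices coming from empty members of $\mathtt{N}$ or $\mathtt{M}$, repeated members of the sequences, and the conventions $S^{-1}=\emptyset$ and $\Sigma\emptyset=S^0$ which make the base case $W=\emptyset$ and the first step $|W|=1$ come out right. The step that needs a genuine idea is the ``moreover'' part: knowing only that $|K(\mathtt{M})|$ and $|K(\mathtt{N})|$ are both $(|W|-1)$-spheres does not make the \emph{inclusion} an equivalence. The induction above succeeds precisely because both standing hypotheses (pointwise containment of the defining sequences, and the covering condition $\bigcup(\,\cdot\,)=W$) descend to links, where one replaces $W$ by $W-w$ and each set $S$ by $S-w$ and uses $\bigcup_i(S_i-w)=(\bigcup_i S_i)-w$, while the covering condition forces every deletion occurring in the two pushout squares to be a cone, so that every map in those squares other than the one on links is a map between contractible spaces.
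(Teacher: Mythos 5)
Your proof is correct and follows essentially the same route as the paper: induction on $|W|$, the link--deletion homotopy pushout, and Lemma~\ref{dl-lk}, with the ``moreover'' part handled by applying the gluing lemma to the inclusion of pushout squares. The only cosmetic difference is that you take $|W|=0$ as the base case and invoke the convention $\Sigma\emptyset=S^0$, whereas the paper writes out $K(\mathtt{N})$ explicitly at $|W|=1$ (equation~\eqref{W=0}) before starting the inductive step; both bookkeeping choices are sound.
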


\begin{proof}
We induct on $|W|$ to get the homotopy type of $K(\mathtt{N})$. When $|W|=0$, there is nothing to do. When $|W|=1$, we may assume $N_1=\cdots=N_s=W$ and $N_{s+1}=\cdots=N_r=\emptyset$ for some $0\le s\le r$, so 
\begin{equation}
\label{W=0}
K(\mathtt{N})=W\sqcup\Delta^{\{a_1,\ldots,a_s\}}.
\end{equation}
Hence if $s\ge 1$, or equivalently $N_1\cup\cdots\cup N_r=W$, then $|K(\mathtt{N})|\simeq S^0$, and if $s=0$, or equivalently $N_1\cup\cdots\cup N_r\ne W$, then $|K(\mathtt{N})|$ is contractible. We assume the case $m-1$ and prove the case $m$. Notice that for any $w\in W$, there is a pushout of spaces
\begin{equation}
\label{pushout}
\xymatrix{|\mathrm{lk}_{K(\mathtt{N})}(w)|\ar[r]\ar[d]&|\mathrm{lk}_{K(\mathtt{N})}(w)*w|\ar[d]\\
|\mathrm{dl}_{K(\mathtt{N})}(w)|\ar[r]&|K(\mathtt{N})|.}
\end{equation}
For $W\ne N_1\cup\cdots\cup N_r$, we take $w\in W-N_1\cup\cdots\cup N_r$. Then we have $\mathtt{N}_w=\widehat{\mathtt{N}}_w$, implying $\mathrm{lk}_{K(\mathtt{N})}(w)=\mathrm{dl}_{K(\mathtt{N})}(w)$ by Lemma \ref{dl-lk}. Then we get $|K(\mathtt{N})|=|\mathrm{lk}_{K(\mathtt{N})}(w)*w|\simeq *$. For $W=N_1\cup\cdots\cup N_r$, we take any $w\in W$, and we have $A_w\ne\emptyset$, so by Lemma \ref{dl-lk} $|\mathrm{dl}_{K(\mathtt{N})}(w)|$ is contractible. Since $|\mathrm{lk}_{K(\mathtt{N})}(w)*w|$ is also contractible, we obtain $|K(\mathtt{N})|\simeq\Sigma|\mathrm{lk}_{K(\mathtt{N})}(w)|$. By Lemma \ref{dl-lk}, we have $\mathrm{lk}_{K(\mathtt{N})}(w)=K(\mathtt{N}_w)$ to which we can apply the induction hypothesis since the ground set of $\mathtt{N}_w$ is $W-w$. Thus since $N_1\cup\cdots\cup N_r=W$ if and only if $(N_1-w)\cup\cdots\cup(N_r-w)=W-w$, we obtain the desired result.

We next prove the second assertion also by induction on $|W|$. The case $|W|=1$ follows from the identity \eqref{W=0}. Note that the diagram \eqref{pushout} is natural with respect to the canonical inclusions between $\mathtt{M},\mathtt{N}$. Then the second assertion holds by the induction hypothesis as above. 
\end{proof}

We next consider the fat wedge filtration of the real moment-angle complex $\RZ_{K(\mathtt{N})}$. We prove the following simple lemma that we are going to use.

\begin{lemma}
\label{fat-wedge}
For non-empty finite sets $A_1,\ldots,A_r$, the following hold.
\begin{enumerate}
\item $\RZ_{\partial\Delta^{A_1}*\cdots*\partial\Delta^{A_r}}=S^{|A_1|-1}\times\cdots\times S^{|A_r|-1}$.
\item Let $T$ be the fat wedge of $S^{|A_1|-1},\ldots,S^{|A_r|-1}$ , that is,
$$T:=\{(x_1,\ldots,x_r)\in S^{|A_1|-1}\times\cdots\times S^{|A_r|-1}\,\vert\,x_i\text{ is the basepoint for some }i\}.$$
Then the natural inclusion $T\to\RZ_{\partial\Delta^{A_1}*\cdots*\partial\Delta^{A_r}}^{|A_1|+\cdots+|A_r|-1}$ is a homotopy equivalence.
\end{enumerate}
\end{lemma}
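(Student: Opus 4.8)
The plan is to reduce everything to the two structural identities for polyhedral products already invoked in the proof of Lemma \ref{join}, namely $\Z_{\partial\Delta^W}=S^{2|W|-1}$ (and its real analogue $\RZ_{\partial\Delta^W}=S^{|W|-1}$) together with $\Z_{K*L}=\Z_K\times\Z_L$ (and $\RZ_{K*L}=\RZ_K\times\RZ_L$), both of which are immediate from the definition of the polyhedral product as a union over simplices, since a simplex of a join is a disjoint union of a simplex in each factor. For part (1), iterating $\RZ_{K*L}=\RZ_K\times\RZ_L$ over the $r$ join factors and then applying $\RZ_{\partial\Delta^{A_i}}=S^{|A_i|-1}$ to each factor gives the asserted product of spheres directly, with no homotopy-theoretic input needed.

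For part (2) the key observation is that the fat wedge filtration is compatible with the product decomposition coming from a join. Writing $L=\partial\Delta^{A_1}*\cdots*\partial\Delta^{A_r}$ and $n_i=|A_i|$, a point of $\RZ_L=S^{n_1-1}\times\cdots\times S^{n_r-1}$ lies in $\RZ_L^{n-i}$ (where $n=n_1+\cdots+n_r$) precisely when at least $i$ of its coordinates, counted in $[m]=A_1\sqcup\cdots\sqcup A_r$, are basepoints; but a sphere coordinate $x_j\in S^{n_j-1}=\RZ_{\partial\Delta^{A_j}}$ has all $n_j$ of its underlying $\RZ$-coordinates at the basepoint exactly when $x_j$ itself is the basepoint of $S^{n_j-1}$. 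So the top stage below the full space, $\RZ_L^{n-1}$, consists of those tuples for which at least one $x_j$ is a basepoint, which is exactly the fat wedge $T$; thus the inclusion $T\to\RZ_L^{n-1}$ is not merely a homotopy equivalence but an equality, and in particular a homotopy equivalence. The only care needed is to see that the filtration degree of a basepoint coordinate jumps by the full $n_j$ and not by less — i.e.\ that one cannot have "partially collapsed" $\partial\Delta^{A_j}$-coordinates contributing an intermediate number of basepoints while keeping $x_j$ off the basepoint — but this follows because in $\RZ_{\partial\Delta^{A_j}}=S^{n_j-1}=S^0*\cdots*S^0$ (a join of $n_j$ copies of $S^0$), a point has a basepoint in the $k$-th join factor iff it lies in the subcomplex $\partial\Delta^{A_j-k}*S^0$-type stratum, and having even one such basepoint already forces the point into a proper face-wise stratum whose union is precisely the complement of the interior; tracing through the definition of $\RZ_{\partial\Delta^{A_j}}$ as $\bigcup_{\sigma\subsetneq A_j}(CS^0,S^0)^\sigma$ shows a point is a basepoint of $S^{n_j-1}$ iff all its coordinates are $S^0$-basepoints, which is the needed statement.

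I expect the main obstacle to be purely bookkeeping: matching the two indexings — the "geometric" index set $[m]=A_1\sqcup\cdots\sqcup A_r$ used to define the fat wedge filtration of $\RZ_L$ on the one hand, and the $r$-fold product coordinates of $S^{n_1-1}\times\cdots\times S^{n_r-1}$ on the other — and checking that "at least $m-i$ of the $m$ coordinates are basepoints at level $i$" translates, under $\RZ_L\cong\prod_j\RZ_{\partial\Delta^{A_j}}$, into a statement about whole product-factors being collapsed to their basepoints. Once this translation is made carefully, both parts are essentially definitional, and in particular part (2) gives an equality of spaces rather than merely a weak equivalence.
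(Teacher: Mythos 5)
Part (1) is fine and matches the paper. Part (2), however, contains a genuine error: you claim that the inclusion $T\to\RZ_{L}^{n-1}$ (for $L=\partial\Delta^{A_1}*\cdots*\partial\Delta^{A_r}$, $n=|A_1|+\cdots+|A_r|$) is an \emph{equality}, and this is false. By definition, $\RZ_L^{n-1}$ is the set of tuples in $\RZ_L$ in which at least one of the $n$ \emph{individual} coordinates (indexed by $[m]=A_1\sqcup\cdots\sqcup A_r$) is a basepoint; under the identification $\RZ_L\cong\prod_j\RZ_{\partial\Delta^{A_j}}$, this means that in at least one block $A_j$, \emph{at least one} (not all!) of the $|A_j|$ coordinates is a basepoint, i.e.\ $x_j\in\RZ_{\partial\Delta^{A_j}}^{|A_j|-1}$. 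By contrast, $T$ requires some $x_j$ to be \emph{the} basepoint, i.e.\ all $|A_j|$ coordinates to be basepoints simultaneously. These are different: already for $r=1$ and $|A_1|=2$, one has $T=\{*\}$ but $\RZ_{\partial\Delta^{\{1,2\}}}^{1}$ is the union of two adjacent sides of the square, a contractible arc that is not a single point.

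The step you identify as ``the only care needed'' — that a basepoint coordinate ``jumps by the full $n_j$'' — is precisely the step that fails; your closing observation (``a point is the basepoint of $S^{n_j-1}$ iff all its coordinates are $S^0$-basepoints'') is true but is not the implication you need (you need the converse-type statement ``having one basepoint coordinate forces all of them,'' which is false). The correct way to finish, and the route the paper takes, is not an equality of spaces but a homotopy equivalence: one writes
$$\RZ_{L}^{n-1}=\bigcup_{i=1}^r\bigl(\RZ_{\partial\Delta^{A_1}}\times\cdots\times\RZ_{\partial\Delta^{A_i}}^{|A_i|-1}\times\cdots\times\RZ_{\partial\Delta^{A_r}}\bigr),$$
notes that each $\RZ_{\partial\Delta^{A_i}}^{|A_i|-1}$ is contractible and that the inclusion $\RZ_{\partial\Delta^{A_i}}^{|A_i|-1}\hookrightarrow\RZ_{\partial\Delta^{A_i}}$ is a cofibration, and then deduces that collapsing each $\RZ_{\partial\Delta^{A_i}}^{|A_i|-1}$ to the basepoint gives a homotopy equivalence from this union onto $T$. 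The contractibility-plus-cofibration argument is essential and cannot be replaced by the pointwise identification you attempted.
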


\begin{proof}
(1) In general, we have $\RZ_{K*L}=\RZ_K\times\RZ_L$ for simplicial complexes $K,L$ and $\RZ_{\partial\Delta^{[m]}}=S^{m-1}$ as in the proof of Lemma \ref{join}. Thus we get the desired result.

(2) By definition $\RZ^{|A_i|-1}_{\partial\Delta^{A_i}}$ is contractible, and the inclusion $\RZ^{|A_i|-1}_{\partial\Delta^{A_i}}\to\RZ_{\partial\Delta^{A_i}}$ is a cofibration. Then since
$$\RZ_{\partial\Delta^{A_1}*\cdots*\partial\Delta^{A_r}}^{|A_1|+\cdots+|A_r|-1}=\bigcup_{i=1}^r(\RZ_{\partial\Delta^{A_1}}\times\cdots\times\RZ_{\partial\Delta^{A_i}}^{|A_i|-1}\times\cdots\times\RZ_{\partial\Delta^{A_r}}),$$
the proof is completed.
\end{proof}

We now prove triviality of the map $\varphi_{K(\mathtt{N})}\colon|K(\mathtt{N})|\to\RZ_{K(\mathtt{N})}^{m-1}$ of Theorem \ref{IK-cone} when $\widetilde{N}_i\cap\widetilde{N}_j\ne\emptyset$ for any $i,j$, that is, under the condition (2) of Theorem \ref{main}. When $N_1\cup\cdots\cup N_r\ne W$, $\varphi_{K(\mathtt{N})}$ is trivial since $|K(\mathtt{N})|$ is contractible by Proposition \ref{homotopy-type}. Then we assume $N_1\cup\cdots\cup N_r=W$. We put
$$\mathtt{M}=\{M_1,\ldots,M_r\}\quad\text{for}\quad M_i=N_i-N_1\cup\cdots\cup N_{i-1}.$$
Then we have $M_1\cup\cdots\cup M_r=W$ and $M_i\subset N_i$ for all $i$. So by Proposition \ref{homotopy-type} the inclusion $|K(\mathtt{M})|\to|K(\mathtt{N})|$ is a homotopy equivalence. Since the map $\varphi_K$ is natural with respect to inclusions of simplicial complexes by definition \cite{IK1}, there is a commutative diagram
$$\xymatrix{|K(\mathtt{M})|\ar[r]^{\varphi_{K(\mathtt{M})}}\ar[d]^{\rm incl}_\simeq&\RZ_{K(\mathtt{M})}^{m-1}\ar[d]\\
|K(\mathtt{N})|\ar[r]^{\varphi_{K(\mathtt{N})}}&\RZ_{K(\mathtt{N})}^{m-1}.}$$
Then it is sufficient to prove that the composite around the right perimeter is null homotopic.

Since $\widetilde{M}_i\cap\widetilde{M}_j=\emptyset$ for $i\ne j$, we have 
$$K(\mathtt{M})=\partial\Delta^{\widetilde{M}_1}*\cdots*\partial\Delta^{\widetilde{M}_r}\quad\text{and}\quad K(\mathtt{M})_U=\partial\Delta^{\widetilde{M}_2}*\cdots*\partial\Delta^{\widetilde{M}_r}$$
where $U=\widetilde{M}_2\cup\cdots\cup\widetilde{M}_r$. Then by Lemma \ref{fat-wedge} we get the following.

\begin{proposition}
\label{RZ-K(M)}
We have
$$\RZ_{K(\mathtt{M})}\cong S^{|M_1|}\times\cdots\times S^{|M_r|},\quad\RZ_{K(\mathtt{M})_U}\cong S^{|M_2|}\times\cdots\times S^{|M_r|}.$$
\end{proposition}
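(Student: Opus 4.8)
The plan is to obtain both homeomorphisms as a direct bookkeeping consequence of Lemma \ref{fat-wedge}(1) applied to the join decompositions recorded just above the statement. Recall that the sets $\widetilde{M}_1,\ldots,\widetilde{M}_r$ are pairwise disjoint and their union is the whole vertex set $W\sqcup\{a_1,\ldots,a_r\}$ of $K(\mathtt{M})$, so a simplicial complex with these as its minimal non-faces splits as $K(\mathtt{M})=\partial\Delta^{\widetilde{M}_1}*\cdots*\partial\Delta^{\widetilde{M}_r}$. For the subcomplex, Lemma \ref{min-non-face}(2) says the minimal non-faces of $K(\mathtt{M})_U$ are precisely the $\widetilde{M}_i$ contained in $U=\widetilde{M}_2\cup\cdots\cup\widetilde{M}_r$; since $\widetilde{M}_1\cap U=\emptyset$ and $\widetilde{M}_1\ne\emptyset$, these are exactly $\widetilde{M}_2,\ldots,\widetilde{M}_r$, giving $K(\mathtt{M})_U=\partial\Delta^{\widetilde{M}_2}*\cdots*\partial\Delta^{\widetilde{M}_r}$. (Both identities are already asserted in the text preceding the proposition, so this step is just a matter of recalling why they hold.)

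Next I would apply Lemma \ref{fat-wedge}(1) with $A_i=\widetilde{M}_i$. Each $\widetilde{M}_i$ is non-empty because it contains the vertex $a_i$, so the hypothesis of the lemma is met and
$$\RZ_{K(\mathtt{M})}=\RZ_{\partial\Delta^{\widetilde{M}_1}*\cdots*\partial\Delta^{\widetilde{M}_r}}\cong S^{|\widetilde{M}_1|-1}\times\cdots\times S^{|\widetilde{M}_r|-1}.$$
Using $|\widetilde{M}_i|=|M_i|+1$ this is $S^{|M_1|}\times\cdots\times S^{|M_r|}$, and the same computation applied on the vertex set $U$ yields $\RZ_{K(\mathtt{M})_U}\cong S^{|M_2|}\times\cdots\times S^{|M_r|}$.

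There is essentially no obstacle here; the only point worth a sentence is the degenerate case $M_i=\emptyset$, which does occur (for instance when $N_i\subset N_1\cup\cdots\cup N_{i-1}$). In that case $\widetilde{M}_i=\{a_i\}$, the complex $\partial\Delta^{\widetilde{M}_i}$ is the $(-1)$-sphere, and its contribution to the real moment-angle complex is an $S^0$ factor, consistent with the stated formula since $S^{|M_i|}=S^0$, and already covered by the convention $\RZ_{\partial\Delta^A}\cong S^{|A|-1}$ for $|A|\ge1$ used in Lemma \ref{fat-wedge}. Hence no separate treatment of such factors is needed, and the proposition follows.
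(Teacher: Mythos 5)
Your proof is correct and takes essentially the same route as the paper: read off the join decompositions $K(\mathtt{M})=\partial\Delta^{\widetilde{M}_1}*\cdots*\partial\Delta^{\widetilde{M}_r}$ and $K(\mathtt{M})_U=\partial\Delta^{\widetilde{M}_2}*\cdots*\partial\Delta^{\widetilde{M}_r}$, then apply Lemma \ref{fat-wedge}(1) with $A_i=\widetilde M_i$ and shift indices by $|\widetilde M_i|=|M_i|+1$. Your explicit aside on the degenerate case $M_i=\emptyset$ (so $\widetilde M_i=\{a_i\}$ and the factor is $S^0$) is a point the paper leaves implicit, but it is already covered by the hypothesis of Lemma \ref{fat-wedge}(1), so there is no gap.
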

 
We now suppose $\widetilde{N}_i\cap\widetilde{N}_j\ne\emptyset$ for any $i,j$, and fix $2\le i\le r$. We define $\mathtt{M}^i$ from $\mathtt{M}$. By our supposition, there exists $w_i\in N_1\cap N_i$. Put
 $$\mathtt{M}^i=\{M_1^i,\ldots,M_r^i\}\quad\text{for}\quad M_k^i=\begin{cases}M_i\cup w_i&k=i\\M_k&k\ne i.\end{cases}$$
Then $\widetilde{M}_j^i\cap\widetilde{M}_k^i=\emptyset$ for $j\ne k$ with $j,k\ge 2$, so quite similarly to Proposition \ref{RZ-K(M)} we have
 $$\RZ_{K(\mathtt{M}^i)_{U\cup w_i}}\cong S^{|M_2^i|}\times\cdots\times S^{|M_r^i|}=S^{|M_2|}\times\cdots\times S^{|M_i|+1}\times\cdots\times S^{|M_r|}.$$
Hence the inclusion $\RZ_{K(\mathtt{M})_U}\to\RZ_{K(\mathtt{M}^i)_{U\cup w_i}}$ is identified with the inclusion
$$S^{|M_2|}\times\cdots\times S^{|M_i|}\times\cdots\times S^{|M_r|}\xrightarrow{1\times\cdots\times{\rm incl}\times\cdots\times 1}S^{|M_2|}\times\cdots\times S^{|M_i|+1}\times\cdots\times S^{|M_r|}$$
in which the $i^\text{th}$ coordinate sphere contracts up to homotopy. It follows that the inclusion $\RZ_{K(\mathtt{M})_U}\to\RZ_{K(\mathtt{M}^2)_{U\cup w_2}}\cup\cdots\cup\RZ_{K(\mathtt{M}^r)_{U\cup w_r}}$ is null homotopic by contracting each coordinate sphere. Thus since $\RZ_{K(\mathtt{M}^2)_{U\cup w_2}}\cup\cdots\cup\RZ_{K(\mathtt{M}^r)_{U\cup w_r}}\subset\RZ_{K(\mathtt{N})}^{m-1}$, we obtain:

\begin{proposition}
\label{inclusion}
If  $\widetilde{N}_i\cap\widetilde{N}_j\ne\emptyset$ for any $i,j$, then the inclusion $\RZ_{K(\mathtt{M})_U}\to\RZ_{K(\mathtt{N})}^{m-1}$ is null homotopic.
\end{proposition}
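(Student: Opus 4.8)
The plan is to factor the inclusion $\RZ_{K(\mathtt{M})_U}\to\RZ_{K(\mathtt{N})}^{m-1}$ through the subspace
$$Y:=\RZ_{K(\mathtt{M}^2)_{U\cup w_2}}\cup\cdots\cup\RZ_{K(\mathtt{M}^r)_{U\cup w_r}}$$
of $\RZ_{K(\mathtt{N})}^{m-1}$ and then to kill, one coordinate sphere at a time, the resulting map $\RZ_{K(\mathtt{M})_U}\to Y$.

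First I would assemble the combinatorics needed to see that everything sits where it should. From $M_i=N_i-(N_1\cup\cdots\cup N_{i-1})$ one gets $N_1\cup\cdots\cup N_i=M_1\cup\cdots\cup M_i$ for all $i$, the $M_i$ are pairwise disjoint, and $M_1=N_1\neq\emptyset$ under hypothesis (2) (else $\widetilde N_1=\{a_1\}$ could not meet the other $\widetilde N_j$); in particular $|U|=m-|M_1|-1\le m-2$ and $|U\cup w_i|=m-|M_1|\le m-1$. By Lemma \ref{min-non-face} the minimal non-faces of $K(\mathtt{M})_U$ are $\widetilde M_2,\dots,\widetilde M_r$ and those of $K(\mathtt{M}^i)_{U\cup w_i}$ are $\widetilde M_2^i,\dots,\widetilde M_r^i$, and since $w_i\in N_1\cap N_i$ a direct check gives $\widetilde M_k^i\subset\widetilde N_k$ for $k\ge2$; hence $K(\mathtt{M})_U$ and each $K(\mathtt{M}^i)_{U\cup w_i}$ are subcomplexes of $K(\mathtt{N})$ on at most $m-1$ vertices. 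Consequently their real moment-angle complexes include into $\RZ_{K(\mathtt{N})}^{m-1}$, so $Y\subset\RZ_{K(\mathtt{N})}^{m-1}$; moreover $K(\mathtt{M})_U\subset K(\mathtt{M}^i)_{U\cup w_i}$ for every $i$, so the inclusion of the proposition indeed factors through $Y$.

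Next I would identify the pieces. Since $\widetilde M_j^i\cap\widetilde M_k^i=\emptyset$ for distinct $j,k\ge2$, the complex $K(\mathtt{M}^i)_{U\cup w_i}$ is the join $\partial\Delta^{\widetilde M_2^i}*\cdots*\partial\Delta^{\widetilde M_r^i}$, so Lemma \ref{fat-wedge}(1) — exactly as in the computation behind Proposition \ref{RZ-K(M)} — gives $\RZ_{K(\mathtt{M}^i)_{U\cup w_i}}\cong S^{|M_2|}\times\cdots\times S^{|M_i|+1}\times\cdots\times S^{|M_r|}$, and under these identifications the inclusion $\RZ_{K(\mathtt{M})_U}\cong S^{|M_2|}\times\cdots\times S^{|M_r|}\to\RZ_{K(\mathtt{M}^i)_{U\cup w_i}}$ is the identity on every factor except the $i$-th, where it is a pointed inclusion $S^{|M_i|}\hookrightarrow S^{|M_i|+1}$ that extends over a hemisphere and is therefore null homotopic. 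Then I would run the contraction in the order $i=2,3,\dots,r$: using a pointed null homotopy inside the $i$-th factor of $\RZ_{K(\mathtt{M}^i)_{U\cup w_i}}\subset Y$, homotope the current map (which at stage $i$ is constant in the coordinates $M_2,\dots,M_{i-1}$ and whose image therefore still lies in $\RZ_{K(\mathtt{M})_U}\subset\RZ_{K(\mathtt{M}^i)_{U\cup w_i}}$) to one that is also constant in the $M_i$-coordinate. After $r-1$ stages the map is constant, so $\RZ_{K(\mathtt{M})_U}\to Y$, hence $\RZ_{K(\mathtt{M})_U}\to\RZ_{K(\mathtt{N})}^{m-1}$, is null homotopic.

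I expect the only real work to be the bookkeeping in the two middle paragraphs: verifying that Proposition \ref{RZ-K(M)} (equivalently Lemma \ref{fat-wedge}) applies verbatim to each $K(\mathtt{M}^i)_{U\cup w_i}$ once the entry $M_1^i$ is discarded — note $M_1^i$ may still meet $M_i^i$ at $w_i$ — and that the various natural inclusions among these full subcomplexes of $K(\mathtt{N})$ are mutually compatible and basepoint preserving, so that the coordinate spheres can legitimately be contracted \emph{one after another} inside the single union $Y$ rather than merely inside the individual pieces $\RZ_{K(\mathtt{M}^i)_{U\cup w_i}}$. The purely homotopy-theoretic input — that an equatorial $S^{n}\hookrightarrow S^{n+1}$ bounds a hemisphere — is trivial once these identifications are nailed down.
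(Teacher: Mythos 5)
Your proposal matches the paper's proof: factor through the union $Y=\RZ_{K(\mathtt{M}^2)_{U\cup w_2}}\cup\cdots\cup\RZ_{K(\mathtt{M}^r)_{U\cup w_r}}\subset\RZ_{K(\mathtt{N})}^{m-1}$, identify each piece as a product of spheres via Lemma \ref{fat-wedge}, observe that the inclusion into the $i$-th piece contracts the $i$-th coordinate sphere $S^{|M_i|}\hookrightarrow S^{|M_i|+1}$, and then null-homotope coordinate by coordinate inside $Y$. The paper compresses the final step into the phrase ``by contracting each coordinate sphere,'' and your extra bookkeeping (that $M_1\ne\emptyset$, that $\widetilde M^i_k\subset\widetilde N_k$, and that after each stage the image remains inside $\RZ_{K(\mathtt{M})_U}$ so the next contraction is available) is exactly the verification the paper leaves implicit.
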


By Proposition \ref{RZ-K(M)}, we have $\RZ_{K(\mathtt{M})_U}=S^{|M_2|}\times\cdots\times S^{|M_r|}$ which we abbreviate by $P$, and let $T$ be the fat wedge of $S^{|M_1|},\ldots,S^{|M_r|}$. Then by Proposition \ref{fat-wedge}, the inclusion $T\to\RZ_{K(\mathtt{M})}^{m-1}$ is a homotopy equivalence, and by Proposition \ref{inclusion}, the inclusion $P\to\RZ_{K(\mathtt{N})}^{m-1}$ is null homotopic. Then the map $T\to\RZ_{K(\mathtt{M})}^{m-1}$ extends to a map $T\cup CP\to\RZ_{K(\mathtt{N})}^{m-1}$, and by Theorem \ref{IK-cone}, this extension satisfies a homotopy commutative diagram
$$\xymatrix{|K(\mathtt{M})|\ar[r]^{\varphi_{K(\mathtt{M})}}\ar@{=}[d]&\RZ_{K(\mathtt{M})}^{m-1}\ar[r]\ar[d]&\RZ_{K(\mathtt{M})}\ar[d]\\
|K(\mathtt{M})|\ar[r]\ar[d]&T\cup CP\ar[r]\ar[d]&\RZ_{K(\mathtt{M})}\cup CP\ar[d]\\
|K(\mathtt{N})|\ar[r]^{\varphi_{K(\mathtt{N})}}&\RZ_{K(\mathtt{N})}^{m-1}\ar[r]&\RZ_{K(\mathtt{N})}}$$
where rows are homotopy cofibrations.

Let $T'\subset P$ be the fat wedge of $S^{|M_2|},\ldots,S^{|M_r|}$. Then we have $T=(S^{|M_1|}\times T')\cup(*\times P)$, so
$$T/P=(S^{|M_1|}\times T')/(*\times T')=S^{|M_1|}\wedge(T'\sqcup *).$$
Since $T'\sqcup*$ is a retractile subcomplex of $P\sqcup*$ in the sense of James \cite{J}, the inclusion $T'\sqcup *\to P\sqcup *$ has a left homotopy inverse after a suspension. Thus the map
$$T\cup CP\simeq T/P=S^{|M_1|}\wedge(T'\sqcup *)\to S^{|M_1|}\wedge(P\sqcup *)=(S^{|M_1|}\times P)/(*\times P)\simeq\RZ_{K(\mathtt{M})}\cup CP$$
has a left homotopy inverse since $|M_1|\ge 1$. This implies that the map $|K(\mathtt{M})|\to T\cup CP$ is null homotopic, implying so is the map $\varphi_{K(\mathtt{N})}$. Therefore we have established the following.

\begin{proposition}
\label{null-homotopic}
If $\widetilde{N}_i\cap\widetilde{N}_j\ne\emptyset$ for any $i,j$, the map $\varphi_{K(\mathtt{N})}$ is null homotopic.
\end{proposition}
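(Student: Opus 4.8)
The plan is to assemble the pieces already in place. By Proposition~\ref{homotopy-type} the inclusion $|K(\mathtt{M})|\to|K(\mathtt{N})|$ is a homotopy equivalence, and the maps $\varphi_K$ of Theorem~\ref{IK-cone} are natural for inclusions of simplicial complexes, so it suffices to show that the composite $|K(\mathtt{M})|\xrightarrow{\varphi_{K(\mathtt{M})}}\RZ_{K(\mathtt{M})}^{m-1}\to\RZ_{K(\mathtt{N})}^{m-1}$ is null homotopic. Write $P:=\RZ_{K(\mathtt{M})_U}$ and let $T$ be the fat wedge of $S^{|M_1|},\dots,S^{|M_r|}$, so that by Lemma~\ref{fat-wedge}(2) the inclusion $T\to\RZ_{K(\mathtt{M})}^{m-1}$ is a homotopy equivalence and $P\subset T$. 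Since $P\to\RZ_{K(\mathtt{N})}^{m-1}$ is null homotopic by Proposition~\ref{inclusion}, the map $T\to\RZ_{K(\mathtt{M})}^{m-1}\to\RZ_{K(\mathtt{N})}^{m-1}$ extends over the mapping cone $T\cup CP$, and Theorem~\ref{IK-cone} packages this into a homotopy-commutative ladder of cofiber sequences with rows $|K(\mathtt{M})|\to\RZ_{K(\mathtt{M})}^{m-1}\to\RZ_{K(\mathtt{M})}$, $|K(\mathtt{M})|\to T\cup CP\to\RZ_{K(\mathtt{M})}\cup CP$, and $|K(\mathtt{N})|\to\RZ_{K(\mathtt{N})}^{m-1}\to\RZ_{K(\mathtt{N})}$. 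Hence it is enough to show that $|K(\mathtt{M})|\to T\cup CP$ is null homotopic.

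Next I would compute the relevant cofibers. Using $\RZ_{K(\mathtt{M})}=S^{|M_1|}\times P$ from Proposition~\ref{RZ-K(M)} and the decomposition $T=(S^{|M_1|}\times T')\cup(\ast\times P)$, where $T'\subset P$ is the fat wedge of $S^{|M_2|},\dots,S^{|M_r|}$, one gets $T\cup CP\simeq T/P\cong S^{|M_1|}\wedge(T'\sqcup\ast)$ and $\RZ_{K(\mathtt{M})}\cup CP\simeq(S^{|M_1|}\times P)/(\ast\times P)=S^{|M_1|}\wedge(P\sqcup\ast)$. The key input is James's retractility \cite{J}: $T'\sqcup\ast$ is a retractile subcomplex of $P\sqcup\ast$, so the inclusion $T'\sqcup\ast\to P\sqcup\ast$ is split after one suspension; since $|M_1|\ge1$, the smash factor $S^{|M_1|}$ already provides that suspension, and therefore $T\cup CP\to\RZ_{K(\mathtt{M})}\cup CP$ has a left homotopy inverse. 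On the other hand, by the ladder above the composite $|K(\mathtt{M})|\to T\cup CP\to\RZ_{K(\mathtt{M})}\cup CP$ agrees with $|K(\mathtt{M})|\xrightarrow{\varphi_{K(\mathtt{M})}}\RZ_{K(\mathtt{M})}^{m-1}\to\RZ_{K(\mathtt{M})}\to\RZ_{K(\mathtt{M})}\cup CP$, and $\varphi_{K(\mathtt{M})}$ followed by the inclusion $\RZ_{K(\mathtt{M})}^{m-1}\hookrightarrow\RZ_{K(\mathtt{M})}$ is null homotopic because it factors through the contractible cone attached to $\RZ_{K(\mathtt{M})}^{m-1}$ by $\varphi_{K(\mathtt{M})}$ itself (Theorem~\ref{IK-cone}, applied with $I=[m]$). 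A left homotopy inverse then forces $|K(\mathtt{M})|\to T\cup CP$ to be null homotopic, and chasing back through the equivalence $|K(\mathtt{M})|\simeq|K(\mathtt{N})|$ gives that $\varphi_{K(\mathtt{N})}$ is null homotopic.

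The genuine obstacle is Proposition~\ref{inclusion}, i.e.\ showing the bottom piece $P=\RZ_{K(\mathtt{M})_U}$ already bounds inside $\RZ_{K(\mathtt{N})}^{m-1}$; this is where the hypothesis $\widetilde{N}_i\cap\widetilde{N}_j\ne\emptyset$ enters. For each $i\ge2$ one chooses $w_i\in N_1\cap N_i$ and enlarges $M_i$ to $M_i\cup w_i$, producing simplicial complexes $K(\mathtt{M}^i)$ sitting inside $K(\mathtt{N})$ whose restrictions to $U\cup w_i$ realize $P$ with the $i$-th coordinate sphere $S^{|M_i|}$ thickened to $S^{|M_i|+1}$, so that this coordinate contracts inside $\RZ_{K(\mathtt{M}^i)_{U\cup w_i}}$; running over $i=2,\dots,r$ and contracting each coordinate in turn kills $P$ inside $\bigcup_{i\ge2}\RZ_{K(\mathtt{M}^i)_{U\cup w_i}}\subset\RZ_{K(\mathtt{N})}^{m-1}$. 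The points requiring care are: checking that the $\widetilde{M}^i_j$ remain pairwise disjoint for $j,k\ge2$, so that the product computation of Proposition~\ref{RZ-K(M)} still applies to each $K(\mathtt{M}^i)_{U\cup w_i}$; verifying the union really lands in the $(m-1)$-st filtration stage $\RZ_{K(\mathtt{N})}^{m-1}$; and, in the assembly of the preceding paragraph, tracking the single suspension demanded by retractility and confirming it is absorbed by $S^{|M_1|}$ — which is precisely why the reduction from $\mathtt{N}$ to $\mathtt{M}$, ensuring $M_1\ne\emptyset$, was made at the outset.
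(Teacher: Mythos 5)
Your proposal is correct and follows essentially the same route as the paper: reduce to $\mathtt{M}$ via the homotopy equivalence of Proposition~\ref{homotopy-type} and naturality of $\varphi$, use Proposition~\ref{inclusion} to attach $CP$ and build the ladder of cofibrations, identify $T\cup CP\simeq S^{|M_1|}\wedge(T'\sqcup *)$ and $\RZ_{K(\mathtt{M})}\cup CP\simeq S^{|M_1|}\wedge(P\sqcup *)$, and use James's retractility together with $|M_1|\ge1$ to split off a left homotopy inverse and conclude $|K(\mathtt{M})|\to T\cup CP$ is null. The only cosmetic difference is that you justify the vanishing of $|K(\mathtt{M})|\to T\cup CP\to\RZ_{K(\mathtt{M})}\cup CP$ by factoring through the top row and the cone attachment of Theorem~\ref{IK-cone}, whereas the paper simply appeals to the middle row being a cofiber sequence; these are the same observation.
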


\begin{theorem}
\label{all-null-homotopic}
If $\widetilde{N}_i\cap\widetilde{N}_j\ne\emptyset$ for any $i,j$, then the fat wedge filtration of $\RZ_{K(\mathtt{N})}$ is trivial.
\end{theorem}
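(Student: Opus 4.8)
The plan is to reduce the triviality of the whole fat wedge filtration of $\RZ_{K(\mathtt{N})}$ to the single map $\varphi_{K(\mathtt{N})}$ handled in Proposition \ref{null-homotopic}, by applying that proposition to suitable full subcomplexes. Recall from Theorem \ref{IK-cone} that $\RZ_{K(\mathtt{N})}^i$ is built from $\RZ_{K(\mathtt{N})}^{i-1}$ by attaching cones along the maps $\varphi_{K(\mathtt{N})_I}\colon|K(\mathtt{N})_I|\to\RZ_{K(\mathtt{N})_I}^{i-1}$ for $I\subset V$ with $|I|=i$; so triviality of the fat wedge filtration of $\RZ_{K(\mathtt{N})}$ means exactly that $\varphi_{K(\mathtt{N})_I}$ is null homotopic for every $\emptyset\ne I\subset V$. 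Since $\varphi_{K(\mathtt{N})_I}$ is computed inside $\RZ_{(K(\mathtt{N})_I)}$, which is a full subcomplex, the natural move is to show that each $K(\mathtt{N})_I$ is again of the form $K(\mathtt{M})$ for some sequence $\mathtt{M}$ satisfying the hypothesis $\widetilde{M}_i\cap\widetilde{M}_j\ne\emptyset$, and then invoke Proposition \ref{null-homotopic}.

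First I would identify the minimal non-faces of $K(\mathtt{N})_I$ using Lemma \ref{min-non-face}(2): they are exactly the $\widetilde{N}_k$ with $\widetilde{N}_k\subset I$. Write $\widetilde{N}_{k_1},\ldots,\widetilde{N}_{k_s}$ for this sublist. If some vertex $w\in W$ lies in $I$ but in none of these $\widetilde{N}_{k_j}$, then $w$ is a ghost vertex of $K(\mathtt{N})_I$, i.e.\ it is a cone point; $\RZ$ of a cone on a vertex is a product with an interval, which is contractible in that factor, and one checks this does not affect triviality of the fat wedge filtration (or one simply removes such vertices one at a time, noting $\RZ_{K(\mathtt{N})_I}\simeq\RZ$ of the complex with that vertex deleted up to the relevant structure). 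So we may assume every vertex of $I$ is used. Now each surviving $\widetilde{N}_{k_j}=N_{k_j}\sqcup\{a_{k_j}\}$ still contains its private point $a_{k_j}$, and the $a_{k_j}$ are distinct, so Proposition \ref{minimal} still holds: $K(\mathtt{N})_I$ has a minimal Taylor resolution. By Proposition \ref{K(N)} (applied with ground set $I\cap W$, say, distributing the $a_{k_j}$ as the private points), we may write $K(\mathtt{N})_I\cong K(\mathtt{M})$ where $\mathtt{M}=\{M_1,\ldots,M_s\}$ with $\widetilde{M}_j$ corresponding to $\widetilde{N}_{k_j}$ up to the identification, so that $\widetilde{M}_j\cap\widetilde{M}_l\ne\emptyset$ is inherited from $\widetilde{N}_{k_j}\cap\widetilde{N}_{k_l}\ne\emptyset$, which holds by hypothesis.

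Having written $K(\mathtt{N})_I\cong K(\mathtt{M})$ with the pairwise-intersection hypothesis intact, I would apply Proposition \ref{null-homotopic} to $K(\mathtt{M})$ to conclude that $\varphi_{K(\mathtt{M})}$, equivalently $\varphi_{K(\mathtt{N})_I}$, is null homotopic. Since $I$ was an arbitrary non-empty subset of $V$, every attaching map in Theorem \ref{IK-cone} is null homotopic, which is precisely the statement that the fat wedge filtration of $\RZ_{K(\mathtt{N})}$ is trivial.

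The main obstacle I anticipate is the bookkeeping around ghost/cone vertices of $K(\mathtt{N})_I$: when $I$ contains $a_i$ but not all of $N_i$, or contains a vertex $w$ not in any selected $\widetilde{N}_{k_j}$, the full subcomplex $K(\mathtt{N})_I$ is not literally of the form $K(\mathtt{M})$ on the nose, so one must either systematically strip off cone points (and argue this is harmless for the fat wedge filtration, e.g.\ via $\RZ_{L*\Delta^S}=\RZ_L\times(\text{contractible})$ as used in the proof of Lemma \ref{fat-wedge}) or re-choose the private points so that Proposition \ref{K(N)}'s recipe applies. Everything else — the inheritance of $\widetilde{M}_j\cap\widetilde{M}_l\ne\emptyset$ and the naturality of $\varphi$ — is routine once the right description of $K(\mathtt{N})_I$ is in hand.
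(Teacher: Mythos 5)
Your proposal is correct, and it reaches the same key fact the paper relies on — every nonempty full subcomplex $K(\mathtt{N})_I$ is of the form $K(\mathtt{M})*\Delta^S$ with $\mathtt{M}$ inheriting the pairwise-intersection hypothesis — but by a slightly more direct route. The paper instead proves a one-step claim that each \emph{deletion} $\mathrm{dl}_{K(\mathtt{N})}(v)$ has this form (via Lemma~\ref{dl-lk}) and then handles $\varphi_{K(\mathtt{N})_I}$ for $I\subsetneq V$ by an induction, locating $K(\mathtt{N})_I$ inside the deletion at some $v\notin I$; you analyse $K(\mathtt{N})_I$ once and for all from Lemma~\ref{min-non-face}(2), trading the paper's one-vertex-at-a-time reduction for a single identification. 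You also correctly locate the only real technical point — the cone/ghost vertices of $K(\mathtt{N})_I$ — and handle it properly: when the stripped-off set $S$ is nonempty, $|K(\mathtt{N})_I|$ is already contractible so $\varphi_{K(\mathtt{N})_I}$ is automatically null homotopic, and when $S=\emptyset$ you land exactly in the setting of Proposition~\ref{null-homotopic}, since $\widetilde{M}_j\cap\widetilde{M}_l=\widetilde{N}_{k_j}\cap\widetilde{N}_{k_l}\ne\emptyset$ is inherited. One small expository remark: the detour through Proposition~\ref{K(N)} is unnecessary — once you know via Lemma~\ref{min-non-face}(2) that the minimal non-faces of $K(\mathtt{N})_I$ are $\widetilde{N}_{k_1},\dots,\widetilde{N}_{k_s}$ and that every vertex of $I$ lies in one of them, you can set $\mathtt{M}=\{N_{k_1},\dots,N_{k_s}\}$ over the ground set $I\setminus\{a_{k_1},\dots,a_{k_s}\}$ and read off $K(\mathtt{N})_I=K(\mathtt{M})$ directly — but this is a matter of streamlining, not correctness.
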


\begin{proof}
Suppose $\widetilde{N}_i\cap\widetilde{N}_j\ne\emptyset$ for any $i,j$. By Proposition \ref{null-homotopic}, it is sufficient to prove:

{\bf Claim} : For any vertex $v$ of $K(\mathtt{N})$, $\mathrm{dl}_{K(\mathtt{N})}(v)=K(\mathtt{M})*\Delta^S$ for some $S,\mathtt{M}$ such that any two elements of $\mathtt{M}$ are not disjoint, where $S$ may be empty.

We prove this claim by induction on $|W|$. When $|W|=0$, the claim is obviously true. The case $|W|=1$ follows from Proposition \ref{homotopy-type}. Suppose the claim holds for $|W|<r$, and take a vertex $v$ of $K(\mathtt{N})$. 

{\bf Case} $v\in W$: By Lemma \ref{dl-lk}, $\mathrm{dl}_{K(\mathtt{M})}(v)=K(\widehat{\mathtt{N}}_v)*\Delta^{A_v}$. By our supposition, any two elements of $\widehat{\mathtt{N}}_v$ are not disjoint. Then the claim is true for $K(\widehat{\mathtt{N}}_v)*\Delta^{A_v}$.

{\bf Case} $v\not\in W$: Since $v=a_i$ for some $i$, we have $\mathrm{dl}_{K(\mathtt{N})}(v)=K(\mathtt{M})$, where $\mathtt{M}=\{N_j\,\vert\,j\ne i\}$.  Then the claim is obviously true for $K(\mathtt{M})$. 
\end{proof}

\begin{proof}
[Proof of Theorem \ref{main}]
The implication (1) $\Rightarrow$ (2) follows from Proposition \ref{1->2}. If (2) holds, then by Theorem \ref{all-null-homotopic}, the fat wedge filtration of $\RZ_K$ is trivial. Thus by Theorem \ref{IK-decomp}, (4) holds. Moreover, by Proposition \ref{homotopy-type}, (3) holds. When (3) or (4) holds, (1) holds by Corollary \ref{suspension-Golod}. Therefore the proof is completed.
\end{proof}


\begin{thebibliography}{BB}
\bibitem[A]{A}A. Ayzenberg, {\it Buchstaber numbers and classical invariants of simplicial complexes}, \url{arXiv:1402.3663}.
\bibitem[BBCG]{BBCG}A. Bahri, M. Bendersky, F.R. Cohen, and S. Gitler, {\it The polyhedral product functor: a method of decomposition for moment-angle complexes, arrangements and related spaces}, Advances in Math. {\bf 225} (2010), 1634-1668.
\bibitem[BBP]{BBP}I.V. Baskakov, V.M. Buchstaber, and T.E. Panov, {\it Cellular cochain algebras and torus actions}, Russian Math. Surveys {\bf 59} (2004), no. 3, 562-563.
\bibitem[DJ]{DJ}M.W. Davis and T. Januszkiewicz, {\it Convex polytopes, Coxeter orbifolds and torus actions}, Duke Mathematical Journal {\bf 62} (1991), 417-452.
\bibitem[F]{F}R. Frankhuizen, {$A_\infty$-resolutions and the Golod property for monomial rings}, \url{arXiv:1612.02737}.
\bibitem[G]{G}E.S. Golod, {\it On the homologies of certain local rings}, Soviet Math. Dokl. {\bf 3} (1962), 745-748.
\bibitem[IK1]{IK1}K. Iriye and D. Kishimoto, {\it Fat wedge filtrations and decomposition of polyhedral products}, to appear in Kyoto J. Math.
\bibitem[IK2]{IK2}K. Iriye and D. Kishimoto, {\it Golodness and polyhedral products for two dimensional simplicial complexes}, \url{ arXiv:1506.08970}.
\bibitem[J]{J}I.M. James, {\it On H-spaces and their homotopy groups}, Quart. J. Math. {\bf 11} (1960), 161-179.
\bibitem[K1]{K1}L. Katth\"{a}n, {\it The Golod property for Staney-Reisner rings in varying characteristic}, J. Pure Appl. Algebra {\bf 220} (2016), 2265-2276.
\bibitem[K2]{K2}L. Katth\"{a}n, {\it A non-Golod ring with a trivial product on its Koszul homology}, to appear in J. Algebra.
\end{thebibliography}
\end{document}